\newtheorem{thm}{Theorem}[section]
\newtheorem{prop}[thm]{Proposition}
\newtheorem{lem}[thm]{Lemma}
\newtheorem{cor}[thm]{Corollary}
\newtheorem{con}[thm]{Conjecture}
\newtheorem{thmalpha}{Theorem}
\newtheorem{lemalpha}[thmalpha]{Lemma}
\theoremstyle{definition}
\theoremstyle{remark}
\newtheorem{remark}[thm]{Remark}
\numberwithin{equation}{section}
\newcommand{\Rea}{\operatorname{Re}} 
\newcommand{\C}{\mathbb{C}} 
\newcommand{\D}{\mathbb{D}}
\newcommand{\B}{\mathcal{B}}
\newcommand{\Hol}{\mathcal{H} \left( \mathbb{D} \right)}
\begin{document}


	\title{Norms of inclusions between some spaces of analytic functions}


	\author{Adrián Llinares}
	\address{Departamento de Matemáticas \\ Universidad Autónoma de Madrid \\ 28049 Madrid, Spain}
	\email{adrian.llinares@uam.es}
	
	\date{\today}
	\subjclass[2020]{30H20, 30H25, 30H30}
	\keywords{Extremal problems, norm of inclusions, analytic Besov spaces, Bloch space, weighted Bergman spaces}


	\begin{abstract}
		The inclusions between the Besov spaces $B^q$, the Bloch space $\mathcal{B}$ and the standard weighted Bergman spaces $A^p_\alpha$ are completely understood, but the norms of the corresponding inclusion operators are in general unknown. In this work, we compute or estimate asymptotically the norms of these inclusions.
	\end{abstract}

	\maketitle



	\section{Introduction}
		
		As is usual, denote $\D$ the open unit disk on the complex plane $\C$. Let $\Hol$ the set of all analytic functions on $\D$. Given $0 < p < \infty$, $r \in [0, 1)$ and $f\in \Hol$, let $M_p(r,f)$ be the integral $p$-mean of $|f|$ over the circle of radius $r$,
			\[
				M_p (r, f) := \left(\dfrac{1}{2\pi} \int_0^{2\pi} |f(re^{it})|^p \: dt \right)^\frac{1}{p}.
			\]
		
		Consider $dA(z) = \frac{1}{\pi} \: dx \, dy = \frac{r}{\pi} \: dr \, dt$, the normalized area measure in $\D$. For any $\alpha > -1$, we define the standard weighted Bergman space $A^p_\alpha$ as the set of all holomorfic functions $f$ such that
			\[
				\| f \|_{A^p_\alpha} := \left( \int_\D |f|^p \: d\mu_\alpha \right)^\frac{1}{p} = \left( (\alpha + 1) \int_0^1 2r (1 - r^2)^\alpha M_p^p (r,f) \: dr \right)^\frac{1}{p} < \infty,
			\]
			where for sake of simplicity $d\mu_\alpha (z) := (\alpha + 1) (1 - |z|^2)^\alpha \: dA(z)$. If $\alpha = 0$, we will simply use the notation $A^p$ instead of $A^p_0$.
		
		If $q > 1$ we say that $f \in \Hol$ belongs to the (analytic) Besov space $B^q$ whenever its derivative $f'$ is included in $A^q_{q - 2}$. $B^q$ is a Banach space when equipped with any of the following two norms:
			\begin{eqnarray*}
				\| f \|_{B^q,1} & := & |f(0)| + \| f' \|_{A^q_{q - 2}}, \\
				\| f \|_{B^q,2} & := & \left( |f(0)|^q + \| f' \|_{A^q_{q - 2}}^q \right)^\frac{1}{q}.
			\end{eqnarray*}
		
		Clearly the two norms are equivalent, and the following sharp inequalities hold
			\[
				\| f \|_{B^q, 2} \leq \| f \|_{B^q, 1} \leq 2^\frac{q - 1}{q} \| f \|_{B^q, 2}, \quad \forall f \in B^q.
			\]
		
		If $q = 1$, this definition is not appropriate since the space would only contain the constant functions, so a different definition is needed. Thus, the space $B^1$ is usually defined as the set of all functions $f$ such that
			\begin{equation} \label{DefB1}
				f(z) = \sum_{k = 1}^\infty b_k \varphi_{a_k} (z),
			\end{equation}
			for some $\{a_k \}_{k \geq 1} \subset \D$ and $\{ b_k \}_{k \geq 1} \in \ell^1$. Here $\varphi_a$ is used in order to denote the disk automorphism
			\[
				\varphi_a (z) := \dfrac{a - z}{1 - \overline{a} z}.
			\]
			
		If $\| f \|_{B^1} := \inf \{ \| \{ b_k \}_{k \geq 1} \|_{\ell^1} \}$, it can be shown \cite{MR814017} that there exist two absolute constants $A, B > 0$ such that
			\begin{equation} \label{AlternativeB1}
				A \| f'' \|_{A^1} \leq \| f - f(0) - f'(0) z \|_{B^1} \leq B \| f'' \|_{A^1}, \quad \forall f \in B^1,
			\end{equation}
			and consequently an alternative characterization of $B^1$ can be given in terms of the integrability of the second derivative, similarly to the way of defining $B^q$ for $q > 1$. Here and also elsewhere in the paper, we will also write \eqref{AlternativeB1} as
			\[
				\| f - f(0) - f'(0)z \|_{B^1} \approx \| f'' \|_{A^1}.
			\]
			If only one inequality holds, we will use the symbol $\lesssim$ or $\gtrsim$ instead.
			
		Finally, the Bloch space $\B$ is the  class of all the analytic functions such that
			\[
				\rho_\B(f) := \sup_{z \in \D} \{ |f'(z)|(1 - |z|^2) \} < \infty.
			\]
			The expression $\rho_\B$ is actualy a (complete) seminorm, and thus $\B$ is a Banach space with respect to the norm
			\[
				\| f \|_\B := |f(0)| + \rho_\B (f).
			\]
		
		The Bloch space is not separable, and therefore the closure of the polynomials is a proper subspace of $\B$. This subspace is called the little Bloch space $\B_0$. It can be proven \cite{MR0361090} that
			\[
				f \in \B_0 \quad \Longleftrightarrow \quad \lim_{|z| \rightarrow 1^{-}} \{ |f'(z)| (1 - |z|^2) \} = 0.
			\]
		
		As a last word regarding the notation, we are going to write
			\[
				B(x,y) := \int_0^1 t^{x-1} (1-t)^{y-1} \: dt \quad \mbox{ and } \quad  \Gamma(x) := \int_0^\infty t^{x-1} e^{-t} \: dt, \quad x, y > 0
			\]
			to denote the Euler Beta and Gamma functions, respectively.
			
		It is well-known that the following chain of inclusions holds
			\[
				B^q \subset \B \subset A^p_\alpha, \quad \mbox{ if } q \geq 1\mbox{, } p > 0 \mbox{ and } \alpha > -1,
			\]
			hence, as a consequence of Closed Graph Theorem ($A^p_\alpha$ is not a normed space if $p < 1$, but it is complete and invariant under translations also in this case) the inclusion operators between them are bounded. The norm of these inclusions are in general either not known or difficult to find in the literature.
			
		We are interested in estimating as precisely as possible the norms of these two inclusions (the third inclusion $B^q \subset A^p_\alpha$ can be trivially estimated by composition).
		
		More specifically, the main results of this work can be listed as follows:
			\begin{itemize}
				\item $\| f \|_\B \leq \| f \|_{B^q, 1}$ and $\| f \|_\B \leq 2^\frac{q-1}{q} \| f \|_{B^q,2}$.
				
				\item $\| f \|_\B \leq 2 \| f \|_{B^1}$.
				
				\item If $p \geq 1$, $\sup \left \{ \| f \|_{A^p_\alpha} \: : \: \| f \|_\B \leq 1 \right \} \approx p$.
				
				\item If $\alpha \geq 0$, there exists $p_\alpha > 2 \max \left \{ 1, B^{-1} \left ( \frac{1}{2}, \alpha + 1 \right) \right \}$ such that $\| f \|_{A^p_\alpha} \leq \| f \|_\B$ for every $p \in (0, p_\alpha]$.
			\end{itemize}
		
	\section{The inclusion of $B^q$ in $\B$}
		
		There are many ways of proving that $B^q \subset \B$. Maybe the most elegant one is to show that $B^q$ is conformally invariant and using the fact that $\B$ is (under some conditions) maximal in this class of spaces \cite{MR529210}, as a counterpart of $B^1$, which is minimal \cite{MR814017}.
		
		However, it is easier to prove that $B^q \subset \B$ via the pointwise bounds of $A^p_\alpha$ functions (see \cite{MR1120512} or Lemma 3.2 in \cite{MR1758653}, for example).
		
		\begin{lemalpha}[Pointwise estimates in $A^p_\alpha$]
			If $p > 0$, $\alpha > -1$ and $\zeta \in \D$, then
				\[
					|g(\zeta)| \leq \dfrac{ \| g \|_{A^p_\alpha}}{(1 - |\zeta|^2)^\frac{\alpha + 2}{p}}, \quad \forall g \in A^p_\alpha.
				\]
				
			Moreover, equality holds if and only if $g$ is a multiple of
				\[
					k_\zeta (z) := \dfrac{1}{(1 - \overline{\zeta} z)^\frac{2(\alpha + 2)}{p}}, \quad \forall z \in \D.
				\]
		\end{lemalpha}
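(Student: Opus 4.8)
The plan is to reduce the general statement to the special case $\zeta = 0$, where it follows from the subharmonicity of $|g|^p$, and then to transport the resulting inequality to an arbitrary point by means of the disk automorphism $\varphi_\zeta$.

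First I would settle the case $\zeta = 0$. Since $g$ is analytic, $|g|^p$ is subharmonic for every $p > 0$, so the sub-mean value inequality gives $|g(0)|^p \leq M_p^p(r,g)$ for all $r \in [0,1)$. Integrating this against the radial part of $d\mu_\alpha$ and using the normalization
\[
	(\alpha + 1) \int_0^1 2r (1 - r^2)^\alpha \: dr = 1,
\]
which is just the substitution $u = r^2$, yields $|g(0)|^p \leq \| g \|_{A^p_\alpha}^p$. Since $(1 - |0|^2)^{(\alpha+2)/p} = 1$, this is exactly the claimed bound at the origin, and because $k_0 \equiv 1$, the extremal functions here should be precisely the constants.

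Next, for a general $\zeta \in \D$ I would exploit the conformal invariance encoded in $\varphi_\zeta$, which satisfies $\varphi_\zeta(0) = \zeta$, is an involution, and obeys the standard identities
\[
	\varphi_\zeta'(w) = \dfrac{|\zeta|^2 - 1}{(1 - \overline{\zeta} w)^2}, \qquad 1 - |\varphi_\zeta(w)|^2 = \dfrac{(1 - |\zeta|^2)(1 - |w|^2)}{|1 - \overline{\zeta} w|^2}.
\]
Setting
\[
	G(w) := \dfrac{g(\varphi_\zeta(w))}{(1 - \overline{\zeta} w)^{2(\alpha + 2)/p}},
\]
which is again analytic on $\D$, the change of variables $z = \varphi_\zeta(w)$ in the integral defining $\| g \|_{A^p_\alpha}^p$ (whose Jacobian is $|\varphi_\zeta'(w)|^2$), combined with the two identities above, should make all the factors $|1 - \overline{\zeta} w|$ cancel and leave
\[
	\| g \|_{A^p_\alpha}^p = (1 - |\zeta|^2)^{\alpha + 2} \, \| G \|_{A^p_\alpha}^p.
\]
Applying the case already proved at the origin to $G$, and noting that $G(0) = g(\zeta)$, then immediately gives $|g(\zeta)| \leq \| g \|_{A^p_\alpha} (1 - |\zeta|^2)^{-(\alpha+2)/p}$.

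Finally, the equality statement would be traced back through this reduction: equality holds for $g$ if and only if it holds at the origin for $G$, that is, if and only if $G$ is constant; undoing the substitution via $w = \varphi_\zeta(z)$ and the identity $1 - \overline{\zeta}\varphi_\zeta(z) = (1 - |\zeta|^2)/(1 - \overline{\zeta} z)$ should recover exactly the multiples of $k_\zeta$. I expect the main obstacle to lie in making the equality case at the origin rigorous, namely showing that $|g(0)| = \| g \|_{A^p_\alpha}$ forces $g$ to be constant. For this I would invoke that $r \mapsto M_p(r,g)$ is non-decreasing (Hardy's convexity theorem), so that equality in the integrated inequality forces $M_p(r,g) \equiv |g(0)|$, and a constant integral mean is possible for an analytic function only when $g$ itself is constant.
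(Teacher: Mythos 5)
Your argument is correct and complete. Note, however, that the paper does not prove this statement at all: it is quoted as a known result (Lemma A) with references to Vuki\'c--style pointwise estimates in the literature, so there is no internal proof to compare against. What you have written is essentially the standard proof from those references: the sub-mean-value property of the subharmonic function $|g|^p$ handles $\zeta = 0$ because the radial weight $(\alpha+1)\,2r(1-r^2)^\alpha\,dr$ is a probability measure, and the conformal change of variables $z = \varphi_\zeta(w)$ with the factor $(1-\overline{\zeta}w)^{-2(\alpha+2)/p}$ (well defined via the principal branch, since $\operatorname{Re}(1-\overline{\zeta}w)>0$) transports the estimate to an arbitrary point; your bookkeeping with the identities for $|\varphi_\zeta'|$ and $1-|\varphi_\zeta|^2$ is accurate, and the computation $1-\overline{\zeta}\varphi_\zeta(z) = (1-|\zeta|^2)/(1-\overline{\zeta}z)$ correctly recovers the multiples of $k_\zeta$ in the equality case. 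The only step you rightly single out as delicate is showing that $M_p(r,g)\equiv |g(0)|$ forces $g$ to be constant; this is true and can be made rigorous most quickly via the Hardy--Stein identity that the paper itself invokes later, since $\frac{d}{dr}M_p^p(r,g)=\frac{p^2}{2r}\int_{r\D}|g'|^2|g|^{p-2}\,dA = 0$ for all $r$ forces $g'\equiv 0$ off the (discrete) zero set of $g$, hence everywhere. With that justification supplied, your proof stands as a self-contained replacement for the citation.
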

		
		If $f \in B^q$ for some $q > 1$, then
			\[
				|f'(z)| (1 - |z|^2) \leq \| f' \|_{A^q_{q-2}}, \quad \forall z \in \D,
			\]
			and therefore $f \in \B$.
			
		On the other hand, if $f \in B^1$ then $f$ is indeed a bounded analytic function and due to Schwarz-Pick Lemma (after a suitable normalization) we get that $f \in \B$.
		
		At this point, we are able to compute the norm of the inclusion $B^q \subset \B$.
		
		\begin{prop}
			\
			\begin{enumerate}
				\item Let $q > 1$. Then
						\[
							\| f \|_\B \leq \| f \|_{B^q, 1} \quad \mbox{ and } \quad \| f \|_\B \leq 2^\frac{q - 1}{q} \| f \|_{B^q, 2}, \quad \forall f \in B^q.
						\]
					
					Moreover, if the equality is attained in any of the above inequalities, then $f$ must be of either of the following forms
						\begin{eqnarray*}
							f_0 (z) & := & \alpha z + \beta, \\
							f_\zeta (z) & := & \gamma \dfrac{1 - |\zeta |^2}{\overline{\zeta}} \dfrac{1}{1 - \overline{\zeta} z} + \delta, \quad 0 < |\zeta | < 1,
						\end{eqnarray*}
						for some suitable constants $\alpha$, $\beta$, $\gamma$ and $\delta$.
				
				\item The following inequality is sharp
						\[
							\| f \|_\B \leq 2 \| f \|_{B^1}, \quad \forall f \in B^1.
						\]
						
						Moreover, equality is attained only if $f \equiv 0$.
			\end{enumerate}
		\end{prop}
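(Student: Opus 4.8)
The plan is to treat the two parts separately, deriving both inequalities in (1) from the sharp pointwise estimate (Lemma A) applied to $f'$. For part (1) I would apply Lemma A with $g=f'$, $p=q$ and $\alpha=q-2>-1$; the exponent $(\alpha+2)/p=q/q=1$ collapses, so $|f'(\zeta)|(1-|\zeta|^2)\le\|f'\|_{A^q_{q-2}}$ for every $\zeta\in\D$, and taking the supremum gives $\rho_\B(f)\le\|f'\|_{A^q_{q-2}}$. Adding $|f(0)|$ yields $\|f\|_\B\le\|f\|_{B^q,1}$, and chaining with the norm equivalence $\|f\|_{B^q,1}\le 2^{(q-1)/q}\|f\|_{B^q,2}$ recorded in the introduction gives the second inequality. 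Note that here the extremal kernel specialises to $k_\zeta(z)=(1-\overline\zeta z)^{-2}$.

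For the equality clause, equality in the first inequality means exactly $\rho_\B(f)=\|f'\|_{A^q_{q-2}}$; a constant $f$ is already of the form $f_0$, so assume $f$ nonconstant. The crucial step is to show the supremum defining $\rho_\B(f)$ is attained in $\D$: I would prove $(1-|\zeta|^2)|f'(\zeta)|\to 0$ as $|\zeta|\to 1^-$ by approximating $f'$ in $A^q_{q-2}$ by a polynomial $P$ (polynomials being dense), using that $(1-|\zeta|^2)|P(\zeta)|\to 0$ trivially while $(1-|\zeta|^2)|f'(\zeta)-P(\zeta)|\le\|f'-P\|_{A^q_{q-2}}$ by Lemma A. As $\zeta\mapsto(1-|\zeta|^2)|f'(\zeta)|$ is then continuous on $\D$ and vanishes at the boundary, it attains its maximum at some $\zeta_0\in\D$, where $|f'(\zeta_0)|(1-|\zeta_0|^2)=\|f'\|_{A^q_{q-2}}$. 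The equality case of Lemma A forces $f'=c\,k_{\zeta_0}$, and integrating recovers $f_0$ when $\zeta_0=0$ and $f_\zeta$ (with $\zeta=\zeta_0$) when $\zeta_0\ne 0$. For the second inequality, equality propagates through the whole chain, so additionally equality holds in $a+b\le 2^{(q-1)/q}(a^q+b^q)^{1/q}$ with $a=|f(0)|$ and $b=\|f'\|_{A^q_{q-2}}$, i.e. $|f(0)|=\|f'\|_{A^q_{q-2}}$; this only constrains the constants and leaves $f$ among the same two families. \emph{The main obstacle in this part is ruling out that the Bloch supremum is merely approached along the boundary, which the little-oh/density argument settles.}

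For part (2) I would first record that $\varphi_a(0)=a$ and $|\varphi_a'(z)|(1-|z|^2)=(1-|a|^2)(1-|z|^2)/|1-\overline a z|^2=1-|\varphi_a(z)|^2\le 1$, so $\rho_\B(\varphi_a)=1$. For $f=\sum_k b_k\varphi_{a_k}$ the triangle inequality gives $|f(0)|\le\sum_k|b_k||a_k|\le\sum_k|b_k|$ and $|f'(z)|(1-|z|^2)\le\sum_k|b_k|(1-|\varphi_{a_k}(z)|^2)\le\sum_k|b_k|$, hence $\rho_\B(f)\le\sum_k|b_k|$; adding and passing to the infimum over representations gives $\|f\|_\B\le 2\|f\|_{B^1}$. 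Sharpness follows from $\|\varphi_a\|_\B=1+|a|$ together with $\|\varphi_a\|_{B^1}\le 1$, so the ratio is at least $1+|a|\to 2$ as $|a|\to 1$.

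Finally, for the equality clause, note $|f(0)|\le\|f\|_{B^1}$ and $\rho_\B(f)\le\|f\|_{B^1}$, so equality $\|f\|_\B=2\|f\|_{B^1}=:2m$ forces $|f(0)|=\rho_\B(f)=m$. If $m=0$ then $f\equiv 0$, so suppose $m>0$ and choose representations with $S_n:=\sum_k|b_k^{(n)}|\to m$. From $m=|f(0)|\le\sum_k|b_k^{(n)}||a_k^{(n)}|\le S_n\to m$ one obtains $\sum_k|b_k^{(n)}|(1-|a_k^{(n)}|)\to 0$, i.e. the mass escapes to the boundary. Using the expansion $\varphi_a(z)=a-(1-|a|^2)\sum_{j\ge 1}\overline a^{\,j-1}z^j$, the Taylor coefficients of $f$, which are independent of the representation, satisfy $c_j=-\sum_k b_k^{(n)}(1-|a_k^{(n)}|^2)\overline{a_k^{(n)}}^{\,j-1}$ for $j\ge 1$, whence $|c_j|\le 2\sum_k|b_k^{(n)}|(1-|a_k^{(n)}|)\to 0$ and $c_j=0$ for all $j\ge 1$. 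Thus $f$ is constant and $\rho_\B(f)=0$, contradicting $\rho_\B(f)=m>0$; hence equality can only occur for $f\equiv 0$. \emph{The subtlety that the infimum defining $\|\cdot\|_{B^1}$ need not be attained is exactly what forces the argument to run along a minimizing sequence while exploiting the representation-independence of the $c_j$.}
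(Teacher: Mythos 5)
Your proposal is correct. Part (1) is essentially the paper's own argument: the pointwise estimate of Lemma A applied to $f'$ with $p=q$, $\alpha=q-2$, the chain $\|f\|_\B\leq\|f\|_{B^q,1}\leq 2^{(q-1)/q}\|f\|_{B^q,2}$, and the observation that $B^q\subset\B_0$ (the paper gets this from density of polynomials in $B^q$ plus continuity of the inclusion, you from density of polynomials in $A^q_{q-2}$ plus Lemma A --- the same mechanism) so that the Bloch supremum is attained and the equality case of Lemma A identifies $f'$ with a multiple of $k_\zeta$. The genuine divergence is in part (2). For the inequality itself your termwise estimate via $|\varphi_a'(z)|(1-|z|^2)=1-|\varphi_a(z)|^2$ is the Schwarz--Pick computation in different packaging; the paper routes it through $\|f\|_\infty\leq\|f\|_{B^1}$ and applies Schwarz--Pick to $f/\|f\|_\infty$. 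For non-attainment the two arguments really differ: the paper observes that equality forces $|f(0)|=\rho_\B(f)=\|f\|_\infty$, and $|f(0)|=\|f\|_\infty$ contradicts the Open Mapping (maximum modulus) principle unless $f$ is constant, in which case $\rho_\B(f)=0$; you instead take a minimizing sequence of representations, deduce that equality forces $\sum_k|b_k^{(n)}|(1-|a_k^{(n)}|)\to 0$, and use the representation-independence of the Taylor coefficients $c_j=-\sum_k b_k^{(n)}(1-|a_k^{(n)}|^2)\overline{a_k^{(n)}}^{\,j-1}$ to conclude $c_j=0$ for $j\geq 1$, again forcing $f$ constant. Both are valid; the paper's is shorter and needs only one classical fact, while yours stays entirely inside the $B^1$ machinery and shows a little more, namely that any function with $|f(0)|=\|f\|_{B^1}$ must have all the mass of its minimizing representations escape to the boundary.
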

		
		\begin{proof}
			\
			\begin{enumerate}
				\item If $f \in B^q$, clearly
						\[
							\| f \|_\B = |f(0)| + \rho_\B (f) \leq  |f(0)| + \| f' \|_{A^q_{q - 2}} = \| f \|_{B^q,1} \leq 2^\frac{q - 1}{q} \| f \|_{B^q, 2}.
						\]
					
					Observe that if either $\| f \|_\B = \| f \|_{B^p,1}$ or $\| f \|_\B = 2^\frac{q-1}{q} \| f \|_{B^q, 2}$, then necessarily $\rho_\B (f) = \| f' \|_{A^q_{q-2}}$. Note that the polynomials are dense in $B^q$ and the inclusion operator is continuous, so indeed $B^q \subset \B_0$. Thus, there exists $\zeta \in \D$ such that
						\[
							|f'(\zeta)| (1 - |\zeta|^2) = \rho_\B (f) = \| f' \|_{A^q_{q-2}},
						\]
						and therefore we have that $f'(z) = c k_\zeta (z) = c (1 - \overline{\zeta} z)^{-2}$ for some constant $c$, from where we deduce the expression of $f$.
				
				\item Let $f \in B^1$ such that $\| f \|_{B^1} = 1$. As is usual, let $\| f \|_{\infty} := \sup \left \{ |f(z)| : z \in \D \right \}$. If $\{b_k\}_{k \geq 1}$ is an admissible sequence of coefficients for $f$ in the formula \eqref{DefB1}, then it is clear that $\| f \|_\infty \leq \| \{b_k\}_{k \geq 1} \|_{\ell^1}$, so $\| f \|_\infty \leq \| f \|_{B^1}$. Thus, we can apply the Schwarz-Pick Lemma to the function $\frac{f}{\| f \|_\infty}$ in order to get $|f'(z)|(1 - |z|^2) \leq \| f \|_\infty$ for every $z$ in $\D$.
						
					Summing up,
						\[
							\| f \|_\B = |f(0)| + \rho_\B(f) \leq 2 \| f \|_\infty \leq 2.
						\]
					
					This inequality is sharp, because if we test with the sequence of automorphisms $\left \{ \varphi_{1 - k^{-1}} \right \}_{k \geq 1}$, it is immediate to check that $\| \varphi_{1 - k^{-1}} \|_{B^1} = 1$ and
						\[
							\left \| \varphi_{1 - \frac{1}{k}} \right \|_\B = 2 - \dfrac{1}{k}, \quad \forall k \geq 1.
						\]
					
					However, if equality holds for a non-identically zero function $f$ (whose $B^1$ norm is assumed to be 1), then in particular $\| f \|_\B = 2 \| f \|_\infty = 2$. This leads to
						\[
							|f(0)| = \rho_\B(f) = \| f \|_\infty = 1,
						\]
						which is in a clear contradiction with the Open Mapping Theorem. Therefore, there are no extremal functions for this inclusion.
			\end{enumerate}
		\end{proof}
		
		\begin{remark}
			\
			\begin{enumerate}
				\item It can be checked that $\| f_\zeta \|_\B = \| f_\zeta \|_{B^q,1}$, $\zeta \in \D$, for any choice of $\alpha$, $\beta$, $\gamma$ and $\delta$.
				
					On the other hand $\| f_\zeta \|_\B = 2^\frac{q-1}{q} \| f_\zeta \|_{B^q, 2}$, whenever $|\alpha| = |\beta| = 2^{-\frac{1}{q}} \| f_0 \|_{B^q, 2}$, if $\zeta = 0$, or $|\gamma| = \left | \gamma \frac{1 - |\zeta|^2}{\overline{\zeta}} + \delta \right | = 2^{-\frac{1}{q}} \| f_\zeta \|_{B^q, 2}$ otherwise.
			
				\item The inequality $\| f \|_\B \leq 2 \| f \|_\infty$ is widely known and can be found in many references (like the remarkable work of Anderson, Clunie and Pommerenke \cite{MR0361090}), but the non-existence of extremal functions, although rather straightforward, is not explicitly stated in the literature.
			\end{enumerate}
		\end{remark}
		
	\section{The inclusion of $\B$ in $A^p_\alpha$}
	
		As a consequence of the definition of the seminorm,
			\begin{equation} \label{PBoundsBloch}
				|f(z) - f(0)| \leq \dfrac{1}{2} \log \left( \dfrac{1 + |z|}{1 - |z|} \right) \rho_\B (f), \quad \forall z \in \D.
			\end{equation}
		
			
		This growth at most logarithmic yields that $\B \subset A^p_\alpha$ for every $p > 0$ and $\alpha > -1$. Moreover, it is sufficient in order to prove the compactness of the inclusion operator.
		
		\begin{lem}
			The inclusion of $\B$ in $A^p_\alpha$ is a compact operator.
		\end{lem}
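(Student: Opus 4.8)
The plan is to prove compactness directly by showing that the image of the closed unit ball of $\B$ is relatively compact in $A^p_\alpha$, combining the logarithmic growth bound \eqref{PBoundsBloch} with a normal-families argument. Since $A^p_\alpha$ is a complete metric space for every $p > 0$ (and a genuine Banach space when $p \geq 1$), it suffices to establish sequential compactness: given any sequence $\{f_n\}$ with $\|f_n\|_\B \leq 1$, I will extract a subsequence that converges in $A^p_\alpha$.

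First I would use \eqref{PBoundsBloch} to observe that $|f_n(z)| \leq 1 + \frac{1}{2} \log \frac{1 + |z|}{1 - |z|}$ for every $n$ and every $z \in \D$, so that $\{f_n\}$ is uniformly bounded on compact subsets of $\D$. By Montel's theorem there is a subsequence $\{f_{n_k}\}$ converging uniformly on compact subsets to some $f \in \Hol$; since derivatives also converge locally uniformly, $|f'(z)|(1 - |z|^2) = \lim_k |f_{n_k}'(z)|(1 - |z|^2) \leq 1$, hence $f \in \B$. Writing $g_k := f_{n_k} - f$, the sequence $\{g_k\}$ converges to $0$ locally uniformly, the numbers $|g_k(0)|$ are bounded, and $\rho_\B(g_k) \leq \rho_\B(f_{n_k}) + \rho_\B(f) \leq 2$ for all $k$.

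The key step is to produce a single $\mu_\alpha$-integrable majorant valid for the whole subsequence. Applying \eqref{PBoundsBloch} to each $g_k$ yields
	\[
		|g_k(z)| \leq C + \log \frac{1 + |z|}{1 - |z|} =: h(z), \quad \forall z \in \D,
	\]
	where $C := \sup_k |g_k(0)| < \infty$, uniformly in $k$. I then have to check that $h \in A^p_\alpha$, i.e. that $\int_0^1 (1 - r^2)^\alpha \left( \log \frac{1 + r}{1 - r} \right)^p r \, dr < \infty$. This is the only quantitative point: near $r = 1$ the integrand behaves like $(1 - r)^\alpha \left( \log \frac{1}{1 - r} \right)^p$, which is integrable for every $\alpha > -1$ because a logarithmic factor is dominated by any negative power of $(1 - r)$.

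Finally, since $g_k \to 0$ pointwise and $|g_k|^p \leq h^p \in L^1(\mu_\alpha)$, the Dominated Convergence Theorem gives $\|g_k\|_{A^p_\alpha}^p = \int_\D |g_k|^p \, d\mu_\alpha \to 0$, so $f_{n_k} \to f$ in $A^p_\alpha$. This establishes sequential, hence genuine, relative compactness of the image of the unit ball, which is exactly compactness of the inclusion. I expect the only real obstacle to be the integrability verification of the logarithmic majorant, together with a careful statement of the normal-families criterion in the range $p < 1$, where $A^p_\alpha$ is merely a complete metric space; every other step is standard.
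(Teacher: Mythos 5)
Your proof is correct, and it follows the paper's overall strategy — extract a locally uniformly convergent subsequence via the logarithmic bound \eqref{PBoundsBloch} and Montel's theorem, then upgrade to $A^p_\alpha$ convergence using the integrability of $\left(1 + \tfrac{1}{2}\log\tfrac{1+|z|}{1-|z|}\right)^p$ against $\mu_\alpha$ — but it closes the argument differently. The paper applies the Dominated Convergence Theorem only to conclude $\|f_{n_k}\|_{A^p_\alpha} \to \|f\|_{A^p_\alpha}$ and then invokes Riesz's Lemma (a.e.\ convergence together with convergence of the norms implies convergence in $L^p$, valid for all $p>0$) to get convergence in the $A^p_\alpha$ topology. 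You instead observe that the limit $f$ is itself in $\B$ (with $\rho_\B(f)\leq 1$ and $|f(0)|$ finite), so the differences $g_k = f_{n_k} - f$ satisfy a uniform Bloch-type bound, admit a single $\mu_\alpha$-integrable majorant $h^p$, and dominated convergence applied directly to $|g_k|^p$ gives $\int_\D |g_k|^p\,d\mu_\alpha \to 0$. Your route is more self-contained — it trades the citation of Riesz's Lemma for the small extra step of verifying $f\in\B$ and bounding the differences — and it handles the case $p<1$ transparently, since $\int_\D |f_{n_k}-f|^p\,d\mu_\alpha$ is exactly the translation-invariant metric there. Both arguments hinge on the same quantitative fact, namely that $(1-r)^\alpha\left(\log\tfrac{1}{1-r}\right)^p$ is integrable near $r=1$ for every $\alpha>-1$, which you correctly identify and verify.
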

		
		\begin{proof}
			Let $\{ f_n \}_{n \geq 1}$ a sequence in $\B$ such that $\| f_n \|_\B \leq 1$. As a consequence of \eqref{PBoundsBloch}, $\{ f_n \}_{n \geq 1}$ is uniformly bounded on compact sets of $\D$, and therefore there exists a subsequence $\{ f_{n_k} \}_{k \geq 1}$ uniformly convergent on compact sets to some analytic function $f$.
			
			Note that
				\[
					|f_{n_k} (z)|^p \leq \left [ 1 + \dfrac{1}{2} \log \left (\dfrac{1 + |z|}{1 - |z|} \right) \right ]^p, \quad \forall z \in \D,
				\]
				and the right-hand side of this last inequality is integrable with respect the measure $\mu_\alpha$, hence due to Dominated Convergence Theorem it follows that $\| f_{n_k} \|_{A^p_\alpha}$ converges to $\| f \|_{A^p_\alpha}$. This convergence of the $A^p_\alpha$ norms and the convergence $\mu_\alpha$-a.e. are enough, using the Riesz's Lemma (see \cite{MR0268655}, Lemma 1 in Chapter 2), to prove that $\displaystyle{\lim_{k \rightarrow \infty} f_{n_k} = f}$ in the $A^p_\alpha$ topology.
		\end{proof}
		
		For the sake of simplicity, denote
			\begin{equation} \label{NormBlochAp}
				C_\alpha (p) := \max \left \{ \| f \|_{A^p_\alpha} \: : \: \| f \|_\B \leq 1 \right \}.
			\end{equation}
			
		Note that the maximum in \eqref{NormBlochAp} is attained because the inclusion is a compact operator.
		
		Given $f \in \Hol$, due to an elementary fact from Measure Theory
			\[
				\lim_{p \rightarrow \infty} \| f \|_{A^p_\alpha} = \| f \|_\infty,				
			\]
			so the value $C_\alpha (p)$ must blow up whenever $p$ tends to infinity.
			
		First, we are going to deduce the order of growth of $C_\alpha(p)$. The following lemma will be needed.
		
		\begin{lemalpha}[Chebyshev's inequality]
			Let $\sigma$ a probability measure on a real interval $(a, b)$. If $f$ and $g$ are two increasing functions such that $f$, $g$ and $fg$ are integrable with respect $\sigma$, then
				\[
					\int_a^b f g \: d\sigma \geq \int_a^b f \: d\sigma \int_a^b g \: d\sigma.
				\]
		\end{lemalpha}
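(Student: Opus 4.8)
The plan is to exploit the sign of the product $(f(x)-f(y))(g(x)-g(y))$. Since both $f$ and $g$ are increasing, this product is nonnegative for every pair $(x,y) \in (a,b)^2$: if $x \geq y$ then both factors are nonnegative, while if $x \leq y$ then both are nonpositive, so in either case the product is $\geq 0$. The idea is to integrate this pointwise inequality against the product measure $\sigma \times \sigma$ and then read off the claim after expanding the integrand.

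Concretely, I would first check that the integrand $(f(x)-f(y))(g(x)-g(y))$ is integrable with respect to $\sigma \times \sigma$ on $(a,b)^2$, so that Fubini's theorem applies. Expanding it produces the four terms $f(x)g(x)$, $f(y)g(y)$, $f(x)g(y)$ and $f(y)g(x)$. The two diagonal terms are integrable because $fg$ is $\sigma$-integrable and $\sigma$ is finite, whereas for the mixed terms one has
\[
\int_a^b\!\int_a^b |f(x)|\,|g(y)| \, d\sigma(x)\,d\sigma(y) = \int_a^b |f| \, d\sigma \int_a^b |g| \, d\sigma < \infty
\]
by the hypotheses that $f$ and $g$ are integrable. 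Hence the whole integrand is absolutely integrable and Fubini is licit.

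Next I would integrate the inequality and apply Fubini together with the fact that $\sigma$ is a probability measure, so that its total mass equals $1$. Each diagonal term contributes $\int_a^b fg \, d\sigma$ after integrating out the free variable against a measure of mass $1$, while each mixed term factorizes as $\int_a^b f \, d\sigma \int_a^b g \, d\sigma$. This yields
\[
0 \leq \int_a^b\!\int_a^b (f(x)-f(y))(g(x)-g(y)) \, d\sigma(x)\,d\sigma(y) = 2\int_a^b fg \, d\sigma - 2 \int_a^b f \, d\sigma \int_a^b g \, d\sigma,
\]
and dividing by $2$ gives exactly the desired inequality.

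There is no deep obstacle here; the only point demanding care is the integrability verification that justifies Fubini, which is precisely where the three hypotheses ``$f$, $g$ and $fg$ are integrable'' are used. The probability-measure assumption is what collapses the single-variable integrals of the diagonal terms down to $\int_a^b fg \, d\sigma$ and what makes each mixed term split as a product of two means; without normalization one would instead pick up the total mass of $\sigma$ as an extra factor.
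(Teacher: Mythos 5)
Your argument is correct and is exactly the paper's proof: the paper likewise deduces the inequality from the nonnegativity of $\big(f(x)-f(y)\big)\big(g(x)-g(y)\big)$ together with Fubini's theorem over the product measure $\sigma \times \sigma$. Your added care with the integrability check justifying Fubini is a welcome but minor elaboration of the same route.
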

		
		\begin{proof}
			The proof is a direct consequence of the fact that the function $\big ( f(x) - f(y) \big) \big( g(x) - g(y) \big)$ is non-negative and Fubini's Theorem.
		\end{proof}
		
		\begin{thm} \label{GrowthOrder}
			Let $\alpha > -1$ fixed. If $p \geq 1$, then $C_\alpha (p) \approx p$.
		\end{thm}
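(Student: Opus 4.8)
The plan is to establish the two matching bounds $C_\alpha(p)\lesssim p$ and $C_\alpha(p)\gtrsim p$ separately, with implied constants allowed to depend on $\alpha$ but not on $p$. Two preliminary observations simplify matters: the measure $\mu_\alpha$ is a probability measure on $\D$ (indeed $\int_\D d\mu_\alpha=(\alpha+1)\int_0^1(1-u)^\alpha\,du=1$ after $u=r^2$), so by the power-mean inequality $p\mapsto\|f\|_{A^p_\alpha}$ is nondecreasing for each fixed $f$, and hence $C_\alpha(p)$ is nondecreasing in $p$. In particular it will be enough to prove the lower bound for large $p$.

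For the upper bound I would start from the pointwise growth estimate \eqref{PBoundsBloch}: if $\|f\|_\B\leq 1$ then $|f(0)|\leq 1$ and $\rho_\B(f)\leq 1$, so
\[
	|f(z)|\leq 1+\tfrac12\log\frac{1+|z|}{1-|z|},\qquad z\in\D.
\]
Since the right-hand side is radial, integrating against $d\mu_\alpha$ and substituting $t=1-r^2$ turns $\|f\|_{A^p_\alpha}^p$ into $(\alpha+1)\int_0^1 t^\alpha\big(1+\tfrac12\log\frac{1+r}{1-r}\big)^p\,dt$ with $r=\sqrt{1-t}$. Using $\frac{1+r}{1-r}\leq\frac{4}{t}$ and then $t=e^{-s}$ reduces the bound to the Gamma-type integral $\int_0^\infty\big(a+\tfrac{u}{2(\alpha+1)}\big)^p e^{-u}\,du$ with $a=1+\log 2$; splitting this by convexity and invoking Stirling in the form $\Gamma(p+1)^{1/p}\lesssim p$ (valid for $p\geq1$) yields $\|f\|_{A^p_\alpha}\lesssim a+\frac{\Gamma(p+1)^{1/p}}{2(\alpha+1)}\lesssim p$, as desired.

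For the lower bound I would test with the single explicit function $g(z)=\tfrac12\log\frac{1}{1-z}$. One checks $g(0)=0$ and $\rho_\B(g)=\tfrac12\sup_{z\in\D}\frac{1-|z|^2}{|1-z|}=1$ (the supremum $2$ being approached as $z\to1$ along the real axis), so $\|g\|_\B=1$ and $g$ is admissible. Since $\operatorname{Re} g(z)=\tfrac12\log\frac{1}{|1-z|}$, we have $|g(z)|^p\geq 2^{-p}\big(\log\tfrac1{|1-z|}\big)^p$ wherever $|1-z|<1$. Restricting $\|g\|_{A^p_\alpha}^p=\int_\D|g|^p\,d\mu_\alpha$ to a Stolz sector $R=\{z=1-\rho e^{i\theta}:0<\rho<\tfrac12,\ |\theta|<\tfrac\pi4\}$, on which $1-|z|^2\approx\rho=|1-z|$, the measure is comparable to $\rho^{\alpha+1}\,d\rho\,d\theta$, whence $\|g\|_{A^p_\alpha}^p\gtrsim 2^{-p}\int_0^{1/2}(\log\tfrac1\rho)^p\rho^{\alpha+1}\,d\rho$. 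Substituting $\rho=e^{-s}$ produces the Gamma integral $\int_{\log 2}^\infty s^p e^{-(\alpha+2)s}\,ds\gtrsim\Gamma(p+1)/(\alpha+2)^{p+1}$ (the truncation being harmless for large $p$), and taking $p$-th roots gives $\|g\|_{A^p_\alpha}\gtrsim\Gamma(p+1)^{1/p}/\big(2(\alpha+2)\big)\gtrsim p$.

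The main obstacle is the lower bound, where the mass of $|g|^p$ concentrates at the boundary singularity $z=1$ and the estimate is genuinely two-dimensional; the work lies in isolating a Stolz sector on which $1-|z|^2$ and $|1-z|$ are comparable, so that the two-dimensional integral collapses to the one-dimensional Gamma integral above. (The correlation inequality quoted just before the theorem offers an alternative route here, decoupling the radial weight from the logarithmic factor in order to bound the integral means from below.) In both directions the order $p$ emerges from the same mechanism, namely $\Gamma(p+1)^{1/p}\sim p/e$, with the $\alpha$-dependent factors $\tfrac1{2(\alpha+1)}$ and $\tfrac1{2(\alpha+2)}$ coming from the weight.
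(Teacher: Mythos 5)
Your argument is correct, and the lower bound is essentially the paper's own: the same test function $-\tfrac12\log(1-z)$, polar coordinates centred at $z=1$, reduction to a Gamma integral and Stirling (the paper integrates over the angular range $t\in(\tfrac{2\pi}{3},\pi)$ and radii up to $1$ rather than a Stolz sector, but the mechanism is identical). The upper bound, however, is genuinely different. The paper does not integrate the pointwise estimate \eqref{PBoundsBloch}; instead it writes $\|f\|_{A^p_\alpha}^p=\int_0^1(1-r^2)^{\alpha+1}\frac{d}{dr}M_p^p(r,f)\,dr$, bounds the derivative of the integral means via \eqref{EstimateDerMp}, and then applies the Chebyshev correlation inequality and H\"older to absorb the factor $M_p^{p-1}(r,f)$, arriving at the clean bound $\|f\|_{A^p_\alpha}\leq\frac{B(\frac12,\alpha+1)}{2}\,p\,\rho_\B(f)$ for $f(0)=0$. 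That explicit, $p$-uniform constant is what the paper later exploits in Theorem \ref{Contractive1} to prove contractivity of the inclusion for $p\leq 2/B(\frac12,\alpha+1)$; your route, which loses a factor $2^{(p-1)/p}$ and the additive constant $1+\log 2$ from the crude bound $\frac{1+r}{1-r}\leq\frac{4}{1-r^2}$, yields the correct order $C_\alpha(p)\lesssim p$ (and in fact a decent asymptotic constant $\tfrac{1}{e(\alpha+1)}$ for the $\limsup$ of $C_\alpha(p)/p$) but could not give contractivity for small $p$. In exchange, your argument is more elementary --- it needs nothing beyond the logarithmic growth estimate and one-variable calculus --- and your preliminary observation that $\mu_\alpha$ is a probability measure, so that $C_\alpha$ is nondecreasing and the lower bound need only be proved for large $p$, is a clean way to dispose of the truncation issues.
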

		
		\begin{proof}
			Take $f \in \B$, and assume that $f(0) = 0$.
			
			Consider the probability measure $d\sigma (r) := \frac{2(1-r^2)^\alpha \: dr}{B \left(\frac{1}{2}, \alpha + 1\right)}$. Note that
				\[
					\int_0^1 2r \: d\sigma(r) = \frac{2}{(\alpha + 1) B \left( \frac{1}{2}, \alpha + 1 \right)}.
				\]
						
			Due to the inequality (see for example \cite{MR0268655}, page 82)
				\begin{equation} \label{EstimateDerMp}
					\dfrac{d}{dr} M_p^p (r,f) \leq p M_p^{p-1}(r,f) M_p(r,f'), \quad \forall r \in (0, 1),
				\end{equation}
				integration by parts and Chebyshev's inequality, it follows that
				\begin{eqnarray*}
					\| f \|_{A^p_\alpha}^p & = & \int_0^1 (1 - r^2)^{\alpha + 1} \dfrac{d}{dr} M_p^p (r,f) \: dr \leq \dfrac{B \left( \frac{1}{2}, \alpha + 1\right)}{2} p \int_0^1 M_p^{p-1} (r, f) \: d\sigma(r) \| f \|_\B \\
						& = & \left (\dfrac{B \left( \frac{1}{2}, \alpha + 1\right)}{2} \right)^2 p (\alpha + 1) \int_0^1 2r \: d\sigma(r) \int_0^1 M_p^{p-1} (r, f) \: d\sigma(r) \| f \|_\B \\
						& \leq & \dfrac{B \left( \frac{1}{2}, \alpha + 1\right)}{2} p (\alpha + 1) \int_0^1 2r (1 - r^2)^\alpha M_p^{p-1} (r, f) \: dr \| f \|_\B,
				\end{eqnarray*}
				and the estimate $\| f \|_{A^p_\alpha} \leq \frac{B \left(\frac{1}{2}, \alpha + 1 \right)}{2} p \| f \|_\B$, if $f(0) = 0$, follows from Hölder's inequality.
			
			The Bloch seminorm $\rho_B$ is invariant under translations. Thus, for every $f \in \B$
				\[
					\| f \|_{A^p_\alpha} \leq |f(0)| + \| f - f(0) \|_{A^p_\alpha} \leq \max \left \{ \dfrac{B \left( \frac{1}{2}, \alpha + 1\right)}{2}, 1 \right \} p \| f \|_\B.
				\]
			
			That is, $C_\alpha (p) \lesssim p$.
			
			In order to get the reverse inequality, we are going to test with the function $f(z) = -\frac{1}{2} \log (1 - z)$. It is immediate to check that $\| f \|_\B = 1$ and using polar coordinates centered at 1
				\begin{eqnarray*}
					\| f \|_{A^p_\alpha}^p & \geq & \dfrac{\alpha + 1}{\pi 2^p} \int_{\frac{\pi}{2}}^{\frac{3\pi}{2}}  \int_0^{2|\cos t|} r^{\alpha + 1} (2|\cos t| - r)^\alpha \left | \log \dfrac{1}{r} \right|^p \: dr \, dt \\
						& \geq & \dfrac{\alpha + 1}{\pi 2^{p-1}} \int_{\frac{2\pi}{3}}^\pi \int_0^1 r^{\alpha + 1} (2|\cos t| - r)^\alpha \log^p \dfrac{1}{r} \: dr \, dt.
				\end{eqnarray*}
			
			Observe that the factor $(2|\cos t| - r)^\alpha$ is a monotonic function with respect $r$, so if we write
				\[
					M_\alpha := \dfrac{1}{\pi} \int_{\frac{2\pi}{3}}^\pi \min \left \{ (2|\cos t|)^\alpha, (2|\cos t| - 1)^\alpha  \right \} \: dt,
				\]
				then
				\[
					C_\alpha^p (p) \geq \| f \|_{A^p_\alpha}^p \geq \dfrac{M_\alpha}{2^{p-1}} \int_0^1 r^{\alpha + 1} \log^p \dfrac{1}{r} \: dr = \dfrac{M_\alpha}{2^{p-1}(\alpha + 2)^{p+1}} \Gamma (p + 1).
				\]
				
			Consequently, the inequality $C_\alpha(p) \gtrsim p$ holds due to Stirling's formula.
		\end{proof}
		
			Indeed, the proof above can be slightly modified to demonstrate the contractivity of the inclusion for a wide range of exponents (depending on $\alpha$, of course).
			
		\begin{thm} \label{Contractive1}
			Let $\alpha \geq 0$. If $p \leq \frac{2}{B \left( \frac{1}{2}, \alpha + 1 \right)}$, then
				\[
					\| f \|_{A^p_\alpha} \leq \| f \|_\B, \quad \forall f \in \B.
				\]
			
			In addition, the equality is attained if and only if $f$ is constant.
			
			In particular, $C_\alpha (p) = 1$ for all these values of $p$.
		\end{thm}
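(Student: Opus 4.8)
The plan is to reduce everything to the single endpoint $p_0 := \frac{2}{B\left(\frac12,\alpha+1\right)}$ and to exploit that $\mu_\alpha$ is a probability measure. Since $\alpha \geq 0$ we have $B\left(\frac12,\alpha+1\right)\leq B\left(\frac12,1\right)=2$, so $p_0\geq 1$; this is precisely what makes the argument of Theorem \ref{GrowthOrder} available at $p_0$. For a fixed $f$, the map $p\mapsto\|f\|_{A^p_\alpha}$ is nondecreasing because $\mu_\alpha$ has total mass one, hence it suffices to establish $\|f\|_{A^{p_0}_\alpha}\leq\|f\|_\B$: for $0<p\leq p_0$ one then obtains $\|f\|_{A^p_\alpha}\leq\|f\|_{A^{p_0}_\alpha}\leq\|f\|_\B$ immediately.

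At the endpoint I would reuse the estimate isolated inside the proof of Theorem \ref{GrowthOrder}, namely that for $p\geq 1$ and $f(0)=0$ one has $\|f\|_{A^p_\alpha}\leq\frac{B\left(\frac12,\alpha+1\right)}{2}\,p\,\rho_\B(f)$. Evaluating at $p=p_0$ makes the constant $\frac{B\left(\frac12,\alpha+1\right)}{2}\,p_0$ equal to $1$, so $\|f\|_{A^{p_0}_\alpha}\leq\rho_\B(f)=\|f\|_\B$ whenever $f(0)=0$. For a general $f$ I would remove the normalization by the triangle inequality in $A^{p_0}_\alpha$ (legitimate since $p_0\geq 1$) together with the translation invariance of $\rho_\B$: writing $g=f-f(0)$,
\[
\|f\|_{A^{p_0}_\alpha}\leq|f(0)|+\|g\|_{A^{p_0}_\alpha}\leq|f(0)|+\rho_\B(g)=|f(0)|+\rho_\B(f)=\|f\|_\B,
\]
using $\|c\|_{A^{p_0}_\alpha}=|c|$ for constants. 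Combined with the trivial lower bound $C_\alpha(p)\geq\|1\|_{A^p_\alpha}=1$, this yields $C_\alpha(p)=1$ on the whole range.

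For the equality statement I would split according to whether $p<p_0$ or $p=p_0$. If $f$ is nonconstant then $|f|$ is not $\mu_\alpha$-a.e.\ constant, so for $p<p_0$ the monotonicity of the $A^p_\alpha$-norms in $p$ is strict, $\|f\|_{A^p_\alpha}<\|f\|_{A^{p_0}_\alpha}\leq\|f\|_\B$, and no extremal can exist. The only remaining case is $p=p_0$, where the multiplicative constant equals exactly $1$ and strictness must be extracted from the chain of inequalities itself. Reducing once more to $g=f-f(0)$ with $g(0)=0$, an extremal would force equality in the pointwise Bloch estimate $M_{p_0}(r,g')\leq\rho_\B(g)/(1-r^2)$ for almost every $r$, i.e.\ $|g'(z)|(1-|z|^2)\equiv\rho_\B(g)$ on $\D$; for nonconstant $g$ this is impossible, since $\log|g'|$ would then be harmonic while $\log\bigl(\rho_\B(g)/(1-|z|^2)\bigr)$ is not. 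Hence only constants are extremal.

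The main obstacle is the sub-unit range: the estimate borrowed from Theorem \ref{GrowthOrder} genuinely uses $p\geq 1$, and its Chebyshev and Hölder steps are not available for $0<p<1$, so it cannot be applied directly there. The monotonicity of the $A^p_\alpha$-norms in $p$ is what circumvents this, and the hypothesis $\alpha\geq 0$ enters exactly at this point, guaranteeing $p_0\geq 1$ so that the endpoint lies in the region where the borrowed estimate is valid. The second delicate point is the endpoint equality analysis, which must rely on the rigidity of the pointwise Bloch bound rather than on a strict multiplicative constant.
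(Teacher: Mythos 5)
Your proof is correct and follows essentially the same route as the paper: reduce to the endpoint $p_0 = 2/B\left(\frac{1}{2},\alpha+1\right) \geq 1$ using that $\mu_\alpha$ is a probability measure, reuse the estimate isolated in the proof of Theorem \ref{GrowthOrder} at $p=p_0$ where the constant becomes $1$, and remove the normalization $f(0)=0$ via the triangle inequality and translation invariance of $\rho_\B$. The only difference is in the last micro-step of the equality analysis: the paper rules out $|f'(z)|(1-|z|^2)\equiv\rho_\B(f)$ by noting that analytic functions with radial modulus are monomials (Maximum Modulus and Identity Principles), whereas you invoke the non-harmonicity of $-\log(1-|z|^2)$ against the harmonicity of $\log|f'|$; both are valid.
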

		
		\begin{proof}
			Note that $\mu_\alpha$ is a probability measure, so it is enough to prove the contractivity for $ p = \frac{2}{B \left( \frac{1}{2}, \alpha + 1 \right)}$. This quantity is strictly increasing in $\alpha$, and therefore $p \geq 1$ because of $\alpha \geq 0$. Then,
				\[
					\| f \|_{A^p_\alpha} \leq |f(0)| + \| f - f(0) \|_{A^p_\alpha} \leq |f(0)| + \dfrac{B \left(  \frac{1}{2}, \alpha + 1 \right)}{2} p \rho_\B (f) = \| f \|_\B.
				\]
				
			If equality holds then the Bloch seminorm is attained at every point. That is,
				\[
					|f'(z)| = \dfrac{\rho_\B(f)}{1 - |z|^2}, \quad \forall z \in \D,
				\]
				but an elementary application of the Maximum Modulus Principle and the Identity Principle shows that the only analytic functions whose moduli are radial functions are the monomials, hence necessarily $\rho_\B (f) = 0$ and $f$ is constant.
		\end{proof}
		
		However, this range of contractivity is not, in general, sharp. We are going to prove that, por any $\alpha \geq 0$ the inclusion of $\B$ in the Hilbert space $A^2_\alpha$ is also contractive (a fact that it is not covered in the last Theorem for $\alpha$ small enough).
		
		A sketch of the proof for the unweighted case $\alpha = 0$ can be found in \cite{MR0361090}, where the expression of $\| f \|_{A^2}$ in terms of the Taylor coefficients of $f$ is needed. In the general case, our main idea relies upon the so called Hardy-Stein identity (\cite{MR1217706}, Section 8.2), or simply Hardy identity in some references,
			\[
				\dfrac{d}{dr} M_p^p (r,f) = \dfrac{p^2}{2r} \int_{r\D} |f'(z)|^2 |f(z)|^{p-2} \: dA(z), \quad \forall r \in (0,1),
			\]
			which can be understood as a refinement of \eqref{EstimateDerMp}.
		
		\begin{thm} \label{Contractive2}
			If $\alpha \geq 0$, then
				\[
					\| f \|_{A^2_\alpha} \leq \| f \|_\B, \quad \forall f \in \B,
				\]
				and equality is attained only for constant functions.
		\end{thm}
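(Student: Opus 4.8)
I would prove the sharp estimate $\|f\|_{A^2_\alpha} \le \|f\|_\B$ for $\alpha \ge 0$ by exploiting the Hardy--Stein identity to obtain a clean expression for $\|f\|_{A^2_\alpha}^2$, and then bounding $|f'(z)|$ by the Bloch seminorm. The advantage of working at the exponent $p=2$ is that the Hardy--Stein integrand $|f'(z)|^2|f(z)|^{p-2}$ loses the troublesome factor $|f|^{p-2}$ (which becomes $|f|^0 = 1$), so the derivative of the mean becomes
\[
	\frac{d}{dr} M_2^2(r,f) = \frac{2}{r}\int_{r\D} |f'(z)|^2 \, dA(z).
\]
This is exactly the feature that makes $\alpha=0$ tractable in \cite{MR0361090}, and I expect it to carry the weighted case as well.

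**Key steps.** Assume first $f(0)=0$, so that $\|f\|_\B = \rho_\B(f)$, and write $\rho := \rho_\B(f)$. Starting from
\[
	\|f\|_{A^2_\alpha}^2 = \int_0^1 (1-r^2)^{\alpha+1}\,\frac{d}{dr}M_2^2(r,f)\,dr
\]
(obtained by integrating by parts in the defining integral, as in Theorem~\ref{GrowthOrder}), I would substitute the Hardy--Stein identity and then interchange the order of integration via Fubini. Converting to polar coordinates, the double integral over $\{|z| \le r\}$ followed by integration in $r$ collapses into a single integral of $|f'(z)|^2$ against a radial weight $w_\alpha(|z|)$, where $w_\alpha(s)$ is the result of integrating $(1-r^2)^{\alpha+1}\cdot \tfrac{2}{r}$ over $r \in (s,1)$ after accounting for the $\tfrac{1}{\pi}$ in $dA$ and the area element. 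Concretely I expect to land on something of the shape
\[
	\|f\|_{A^2_\alpha}^2 = \int_\D |f'(z)|^2\, W_\alpha(|z|)\, dA(z),
\]
with $W_\alpha(s) = \int_s^1 \tfrac{2}{r}(1-r^2)^{\alpha+1}\,dr$. The pointwise bound $|f'(z)|^2 \le \rho^2/(1-|z|^2)^2$ then reduces the problem to checking the scalar inequality
\[
	\int_\D \frac{W_\alpha(|z|)}{(1-|z|^2)^2}\,dA(z) \le 1,
\]
i.e. $\int_0^1 2s\,(1-s^2)^{-2} W_\alpha(s)\,ds \le 1$, which I would verify by a direct computation (expected to be an identity, giving sharpness). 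For general $f$, I would split off $f(0)$ as in the earlier theorems, using $\|f\|_{A^2_\alpha} \le |f(0)| + \|f-f(0)\|_{A^2_\alpha} \le |f(0)| + \rho_\B(f) = \|f\|_\B$.

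**Main obstacle and equality case.** The delicate point is the Fubini interchange and the resulting weight computation: I must confirm that the iterated integral converges (it does, since $f\in\B$ grows only logarithmically) and that the inner radial integral $W_\alpha$ together with the factor $(1-s^2)^{-2}$ integrates to exactly $1$, not merely to something $\le 1$ — otherwise the bound would not be $\|f\|_{A^2_\alpha}\le\|f\|_\B$ on the nose. I expect the Beta-function bookkeeping here to be the real work, and the appearance of $\alpha \ge 0$ (rather than $\alpha > -1$) to enter precisely through the convergence or monotonicity needed to keep the scalar integral bounded by $1$. For the equality statement, tracing back the pointwise estimate, equality in $\|f\|_{A^2_\alpha}=\|f\|_\B$ forces both $f(0)=0$ to contribute no slack and $|f'(z)|^2(1-|z|^2)^2 \equiv \rho^2$ on the support of $W_\alpha$, i.e. almost everywhere; as in Theorem~\ref{Contractive1}, an analytic function with radial modulus must be a monomial, which combined with the Bloch bound forces $\rho=0$, so $f$ is constant.
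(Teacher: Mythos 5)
Your proposal is correct and follows essentially the same route as the paper: integration by parts plus the Hardy--Stein identity at $p=2$, followed by the pointwise bound $|f'(z)|\leq \rho_\B(f)/(1-|z|^2)$ and the monomial argument for the equality case. The only minor inaccuracy is that your scalar integral evaluates to $\frac{1}{\alpha+1}$ rather than exactly $1$ (so it is an identity only at $\alpha=0$); since you only need it to be $\leq 1$ and you correctly attribute that bound to the hypothesis $\alpha\geq 0$, nothing breaks.
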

		
		\begin{proof}
			Take $f \in \B$. Using integration by parts and the Hardy-Stein identity, we obtain
				\begin{eqnarray*}
					\| f \|_{A^2_\alpha}^2 & = & |f(0)|^2 + 2 \int_0^1 \dfrac{(1 - r^2)^{\alpha + 1}}{r} \int_{r\D} |f'(z)|^2 \: dA(z) \, dr \\
						& \leq & |f(0)|^2 + \dfrac{\rho_\B(f)^2}{\alpha + 1} \leq \| f \|_\B^2.
				\end{eqnarray*}
				
			Again, $\| f \|_{A^2_\alpha} = \| f \|_\B$ implies that $\rho_\B(f)$ is attained at every point, and thus $f$ is constant.
		\end{proof}
		
		At this point, we realize that the functions that vanish at the origin play an important role, and consequently we are going to write $\tilde{C}_\alpha(p)$ to denote the extremal problem
			\[
				\tilde{C}_\alpha(p) = \max \left \{ \| f \|_{A^p_\alpha} \: : \: \| f \|_\B \leq 1, f(0) = 0 \right \}.
			\]
		
		As an initial remark, note that Theorem \ref{GrowthOrder} holds also for $\tilde{C}_\alpha (p)$. That is, $\tilde{C}_\alpha (p) \approx p$ as well.
		
		It is trivial that $\tilde{C}_\alpha (p) \leq C_\alpha(p)$ for every $p$, but if both $p$ and $\tilde{C}_\alpha(p)$ are greater than or equal to 1, due to the triangle inequality
			\[
				\| f \|_{A^p_\alpha} \leq |f(0)| + \tilde{C}_\alpha (p) \rho_\B (f) \leq \tilde{C}_\alpha (p) \| f \|_\B, \quad \forall f \in \B,
			\]
			and therefore $C_\alpha (p) = \tilde{C}_\alpha (p)$ in this case.
		
		Similarly, $\tilde{C}_\alpha (p) < 1$ implies that $C_\alpha (p) = 1$. In other words,
			\begin{equation} \label{RemarkBAp}
				C_\alpha (p) = \max \left \{ 1, \tilde{C}_\alpha(p) \right \}, \quad \forall p \geq 1.
			\end{equation}
			
		Observe that $\tilde{C}_\alpha$ is strictly increasing due to the existence of extremal functions and Hölder's inequality.
		
		
		\begin{cor} If $\alpha \geq 0$, there exists $p_\alpha > 2 \max \left \{ 1, B^{-1} \left( \frac{1}{2}, \alpha + 1 \right)\right \}$ such that
			\[
				\| f \|_{A^p_\alpha} \leq \| f \|_\B, \quad \forall f \in \B,
			\]
			for any $p \in (0, p_\alpha]$.
		\end{cor}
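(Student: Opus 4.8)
The plan is to read the threshold as $p_0 := 2\max\left\{1, B^{-1}\left(\frac12, \alpha+1\right)\right\} = \max\left\{2, \frac{2}{B\left(\frac12,\alpha+1\right)}\right\}$, so that $p_0 \geq 2$ is exactly the larger of the two contractivity thresholds already obtained: the one furnished by Theorem \ref{Contractive1} (valid for $p \leq \frac{2}{B\left(\frac12,\alpha+1\right)}$) and the Hilbert exponent $p=2$ of Theorem \ref{Contractive2}. I would produce $p_\alpha$ as a point slightly above $p_0$ and show contractivity persists there, the crucial leverage being that \emph{at} $p_0$ the estimate $\|f\|_{A^{p_0}_\alpha} \leq \|f\|_\B$ is strict for non-constant $f$.

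First I would record that $\tilde C_\alpha(p_0) < 1$ strictly. Whichever theorem realizes $p_0$ (Theorem \ref{Contractive1} when $B\left(\frac12,\alpha+1\right)\leq 1$, so $p_0 = \frac{2}{B\left(\frac12,\alpha+1\right)}$; Theorem \ref{Contractive2} when $B\left(\frac12,\alpha+1\right) > 1$, so $p_0 = 2$) asserts that equality forces $f$ to be constant. Since every competitor for $\tilde C_\alpha(p_0)$ satisfies $f(0)=0$, the only constant competitor is $f\equiv 0$, whereas the extremal (which exists by the compactness of the inclusion) is non-constant because already $\|z\|_{A^{p_0}_\alpha} > 0$ gives $\tilde C_\alpha(p_0) > 0$. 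Hence the extremal yields a strict inequality and $\tilde C_\alpha(p_0) < 1$.

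The heart of the matter is to propagate this strict inequality to exponents just above $p_0$, i.e.\ an upper-semicontinuity statement for $\tilde C_\alpha$ at $p_0$, and I would argue by contradiction. If there were $p_n \downarrow p_0$ with $\tilde C_\alpha(p_n)\geq 1$, pick extremals $f_n$ with $\|f_n\|_\B\leq 1$, $f_n(0)=0$ and $\|f_n\|_{A^{p_n}_\alpha} = \tilde C_\alpha(p_n)$. By \eqref{PBoundsBloch} the $f_n$ are uniformly bounded on compact sets, so Montel's theorem gives a subsequence converging locally uniformly to some $f$; lower semicontinuity of $\rho_\B$ under local uniform convergence (via $f_{n_k}'\to f'$) gives $\|f\|_\B\leq 1$, and $f(0)=0$. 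The same bound \eqref{PBoundsBloch} supplies a $\mu_\alpha$-integrable majorant $\max\left\{1, \left[\frac12\log\frac{1+|z|}{1-|z|}\right]^{p_0+1}\right\}$ of $|f_{n_k}|^{p_{n_k}}$, so the Dominated Convergence Theorem gives $\tilde C_\alpha(p_{n_k})^{p_{n_k}} \to \|f\|_{A^{p_0}_\alpha}^{p_0}$, whence $\|f\|_{A^{p_0}_\alpha} = \lim \tilde C_\alpha(p_{n_k}) \geq 1$. This contradicts $\|f\|_{A^{p_0}_\alpha}\leq \tilde C_\alpha(p_0) < 1$ when $f$ is non-constant, and contradicts $\|f\|_{A^{p_0}_\alpha}=0$ when $f\equiv 0$. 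I expect the interchange of the supremum over $f$ with the limit in $p$ — where the extremizer itself moves with $p$ — to be the main obstacle, and it is precisely the compactness of the inclusion together with the uniform Bloch growth bound that overcomes it.

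Finally I would conclude: the contradiction argument produces $p_\alpha > p_0$ with $\tilde C_\alpha(p) < 1$ on $(p_0, p_\alpha]$, and monotonicity of $\tilde C_\alpha$ extends $\tilde C_\alpha(p)\leq 1$ to all $p\leq p_\alpha$. By \eqref{RemarkBAp} this yields $C_\alpha(p)=1$, that is $\|f\|_{A^p_\alpha}\leq\|f\|_\B$, for $p\in[1,p_\alpha]$; for $p\in(0,1)$ I would invoke that $\mu_\alpha$ is a probability measure, so $\|f\|_{A^p_\alpha}\leq \|f\|_{A^{p_\alpha}_\alpha}\leq\|f\|_\B$. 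Since $p_\alpha > p_0 = 2\max\left\{1, B^{-1}\left(\frac12,\alpha+1\right)\right\}$, this is exactly the assertion.
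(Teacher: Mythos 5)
Your proposal is correct and follows the route the paper intends: strictness of $\tilde C_\alpha$ below $1$ at the threshold $p_0=\max\bigl\{2,\tfrac{2}{B(1/2,\alpha+1)}\bigr\}$ (coming from the equality cases of Theorems \ref{Contractive1} and \ref{Contractive2} together with $f(0)=0$), propagation to exponents slightly above $p_0$, and then \eqref{RemarkBAp}. The paper's proof is a one-line citation that leaves the propagation step entirely implicit; your normal-families/dominated-convergence argument for the right upper semicontinuity of $\tilde C_\alpha$ at $p_0$ is exactly the detail needed to make that step rigorous, and it is carried out correctly.
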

		
		\begin{proof}
			It is a consequence of \eqref{RemarkBAp} and Theorems \ref{Contractive1} and \ref{Contractive2}.
		\end{proof}
		
		However, this Corollary does not imply that this improvement is ``significantly'' large. In the unweighted case, the model function $f(z) = \frac{1}{2} \log \frac{1 + z}{1 - z}$ verifies, after some suitable changes of variables, that
			\[
				\| f \|_{A^{\frac{25}{4}}}^\frac{25}{4} = \dfrac{1}{\pi 2^\frac{9}{4}} \int_0^\infty \int_0^{\frac{\pi}{2}} \dfrac{(x^2 + y^2)^\frac{25}{8}}{\big( e^x + 2\cos(y) + e^{-x} \big)^2} \: dy \, dx > 1,
			\]
			and therefore $p_0 \in \left(2, \frac{25}{4} \right)$. This last inequality has been checked by a numerical computation using the software \textit{Wolfram Alpha}.
		
		Next, since $C_\alpha$ and $\tilde{C}_\alpha$ will be eventually equal, we are going to exploit the restriction of $\tilde{C}_\alpha$ in order to guess the correct asymptotics for $C_\alpha$.
		
		\begin{lem} \label{LemParseval}
			If $\alpha > -1$ and $f(z) = \displaystyle{\sum_{n = 0}^\infty a_n z^n}$ is analytic in $\D$, then
				\[
					(\alpha + 1) (\alpha + 2) \sum_{n = 1}^\infty \dfrac{n}{n + \alpha + 2} \dfrac{\Gamma(\alpha + 2) n!}{\Gamma(n + \alpha + 2)} |a_n|^2 = \int_\D |f'(z)|^2 (1 - |z|^2)^2 \: d\mu_\alpha (z).
				\]
				
			In addition, if $f(0) = \ldots = f^{(k-1)}(0) = 0$ for some $k \geq 1$,
				\[
					\| f \|_{A^2_\alpha}^2 \leq \dfrac{k + \alpha + 2}{(\alpha + 1)(\alpha + 2)k} \int_\D |f'(z)|^2 (1 - |z|^2)^2 \: d\mu_\alpha (z).
				\]
		\end{lem}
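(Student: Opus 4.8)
My plan is to prove both assertions by expanding $f$ and $f'$ in Taylor series and reducing every integral to a Beta function via orthogonality of $\{e^{int}\}$ on circles. Writing $f'(z) = \sum_{n\ge 1} n a_n z^{n-1}$ and recalling that $d\mu_\alpha(z) = (\alpha+1)(1-|z|^2)^\alpha\,dA(z)$ with $dA = \frac{r}{\pi}\,dr\,dt$, I would compute the right-hand side of the identity in polar coordinates. Since $\int_0^{2\pi} e^{i(m-n)t}\,dt = 2\pi\delta_{mn}$, the double sum coming from $|f'|^2$ collapses to a single sum, leaving the radial integral $\int_0^1 r^{2n-1}(1-r^2)^{\alpha+2}\,dr$. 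The substitution $s = r^2$ turns this into $\tfrac12 B(n,\alpha+3) = \tfrac12\,\Gamma(n)\Gamma(\alpha+3)/\Gamma(n+\alpha+3)$, so that the right-hand side equals $(\alpha+1)\sum_{n\ge 1} n^2|a_n|^2\,\Gamma(n)\Gamma(\alpha+3)/\Gamma(n+\alpha+3)$.

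The first identity is then a bookkeeping exercise: applying $\Gamma(x+1)=x\Gamma(x)$ in the forms $n^2\Gamma(n) = n\cdot n!$, $\Gamma(\alpha+3) = (\alpha+2)\Gamma(\alpha+2)$ and $\Gamma(n+\alpha+3) = (n+\alpha+2)\Gamma(n+\alpha+2)$ rewrites the coefficient as exactly $(\alpha+1)(\alpha+2)\frac{n}{n+\alpha+2}\frac{\Gamma(\alpha+2)\,n!}{\Gamma(n+\alpha+2)}$, matching the claimed left-hand side term by term.

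For the second assertion I would first record the companion Parseval formula for the norm itself, obtained by the identical computation (now with the weight $(1-|z|^2)^\alpha$ and the substitution $s=r^2$ giving $B(n+1,\alpha+1)$): one finds $\| f \|_{A^2_\alpha}^2 = \sum_{n\ge 0} w_n |a_n|^2$, where $w_n := \Gamma(\alpha+2)\,n!/\Gamma(n+\alpha+2)$. Comparing this with the first identity, the desired inequality reduces to a purely coefficient-wise comparison of $w_n$ against $(\alpha+1)(\alpha+2)\frac{n}{n+\alpha+2}w_n$. Under the hypothesis $f(0)=\cdots=f^{(k-1)}(0)=0$, equivalently $a_0=\cdots=a_{k-1}=0$, both sums run over $n\ge k$, so it suffices to check that $\frac{n+\alpha+2}{n}\le\frac{k+\alpha+2}{k}$ for every $n\ge k$.

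This last point is the one genuine idea: since $\frac{n}{n+\alpha+2} = 1 - \frac{\alpha+2}{n+\alpha+2}$ is strictly increasing in $n$ (because $\alpha+2>0$), its reciprocal is decreasing, and is therefore maximized over $n\ge k$ at $n=k$. Multiplying each term $w_n|a_n|^2$ by this uniform bound and summing yields $\| f \|_{A^2_\alpha}^2 \le \frac{k+\alpha+2}{(\alpha+1)(\alpha+2)k}\int_\D |f'(z)|^2(1-|z|^2)^2\,d\mu_\alpha(z)$, as claimed. I expect no serious obstacle here; the only care needed is in tracking the normalizing constant $(\alpha+1)$ in $d\mu_\alpha$ and the factor $1/\pi$ in $dA$ through the Gamma-function manipulations, while the monotonicity of $n\mapsto n/(n+\alpha+2)$ does all the analytic work.
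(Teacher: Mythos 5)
Your proposal is correct and follows exactly the route the paper intends: its proof of this lemma is the one-line remark that it follows from Parseval's identity and the monotonicity of $\left\{ \frac{n}{n+\alpha+2} \right\}_{n \geq 1}$, which is precisely the coefficient-wise comparison you carry out in detail. All of your Beta/Gamma bookkeeping checks out, so there is nothing to add.
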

		
		\begin{proof}
			This is a consequence of Parseval's identity and the monotonicity of the sequence $\left \{ \frac{n}{n + \alpha + 2} \right \}_{n \geq 1}$.
		\end{proof}
		
		\begin{thm} \label{Bound2n}
			For any $\alpha > -1$ and $n \geq 2$
				\[
					\tilde{C}_\alpha (2n) \leq \dfrac{1}{\sqrt{(\alpha + 1)(\alpha + 2)}} \left( \dfrac{(\alpha + 1)(\alpha + 2) \Gamma(n + \alpha + 3)n!}{\Gamma(\alpha + 4)} \tilde{C}_\alpha^2 (2) \right)^\frac{1}{2n}.
				\]
		\end{thm}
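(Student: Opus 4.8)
The plan is to collapse the $A^{2n}_\alpha$ estimate onto a square-mean computation, where the coefficient machinery of Lemma \ref{LemParseval} becomes available. Fix $f \in \B$ with $f(0) = 0$ and $\| f \|_\B \leq 1$; since $f(0) = 0$ we have $\rho_\B(f) = \| f \|_\B \leq 1$, and hence the pointwise Bloch bound $|f'(z)|^2 (1 - |z|^2)^2 \leq 1$ for every $z \in \D$. The crucial observation is the elementary identity $\| f \|_{A^{2m}_\alpha}^{2m} = \| f^m \|_{A^2_\alpha}^2$, valid for every integer $m \geq 1$: raising $f$ to the $m$-th power transports the even-exponent problem back into the Hilbert space $A^2_\alpha$, which is precisely where Lemma \ref{LemParseval} operates.

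First I would set $g := f^m$ and note that, because $f(0) = 0$, the function $g$ vanishes to order $m$ at the origin, so Lemma \ref{LemParseval} applies with $k = m$. Since $g' = m f^{m-1} f'$, one has $|g'|^2 = m^2 |f|^{2(m-1)} |f'|^2$, and inserting the pointwise Bloch bound gives
\[
	\int_\D |g'(z)|^2 (1 - |z|^2)^2 \: d\mu_\alpha(z) \leq m^2 \int_\D |f(z)|^{2(m-1)} \: d\mu_\alpha(z) = m^2 \| f \|_{A^{2(m-1)}_\alpha}^{2(m-1)}.
\]
Combining this with the inequality of Lemma \ref{LemParseval} produces the recursion
\[
	\| f \|_{A^{2m}_\alpha}^{2m} \leq \dfrac{m(m + \alpha + 2)}{(\alpha + 1)(\alpha + 2)} \| f \|_{A^{2(m-1)}_\alpha}^{2(m-1)}, \quad 2 \leq m \leq n,
\]
where the extra factor $m$ in the numerator is exactly what is gained from the high vanishing order of $g$.

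Next I would iterate this recursion from $m = n$ down to $m = 2$, bottoming out at the base case $\| f \|_{A^2_\alpha}^2 \leq \tilde{C}_\alpha(2)^2$. This yields $\| f \|_{A^{2n}_\alpha}^{2n} \leq \big( \prod_{m=2}^n \frac{m(m+\alpha+2)}{(\alpha+1)(\alpha+2)} \big) \tilde{C}_\alpha(2)^2$, and it remains only to evaluate the telescoping product. Here $\prod_{m=2}^n m = n!$, the constant denominators contribute $[(\alpha+1)(\alpha+2)]^{n-1}$, and $\prod_{m=2}^n (m + \alpha + 2) = \Gamma(n + \alpha + 3)/\Gamma(\alpha + 4)$. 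Taking the supremum over all admissible $f$ and extracting the $2n$-th root then delivers exactly the claimed bound on $\tilde{C}_\alpha(2n)$.

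This is essentially a single-idea argument, so there is no serious conceptual obstacle once the substitution $g = f^m$ is in place; the real care is in the bookkeeping. I expect the two delicate points to be (i) verifying that the gain from the vanishing order, namely the factor $\frac{k+\alpha+2}{(\alpha+1)(\alpha+2)k}$ with $k = m$, combines with the $m^2$ coming from $|g'|^2$ to leave precisely $\frac{m(m+\alpha+2)}{(\alpha+1)(\alpha+2)}$, and (ii) confirming that the Gamma-function rewriting of $\prod_{m=2}^n (m+\alpha+2)$ matches the normalization $\Gamma(n+\alpha+3)/\Gamma(\alpha+4)$ in the statement rather than an off-by-one shift.
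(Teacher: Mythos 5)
Your proposal is correct and follows essentially the same route as the paper: both rest on the identity $\| f \|_{A^{2m}_\alpha}^{2m} = \| f^m \|_{A^2_\alpha}^2$, the application of Lemma \ref{LemParseval} to $f^m$ (which vanishes to order $m$ at the origin), and the pointwise Bloch bound on $|f'|^2(1-|z|^2)^2$, yielding the recursion $\| f \|_{A^{2m}_\alpha}^{2m} \leq \frac{m(m+\alpha+2)}{(\alpha+1)(\alpha+2)} \| f \|_{A^{2(m-1)}_\alpha}^{2(m-1)}$. The only cosmetic difference is that you unwind the recursion as a telescoping product while the paper phrases it as an induction on $n$; the bookkeeping in both cases agrees with the stated bound.
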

		
		\begin{proof}
			We give a proof by Induction. Let $f \in \B$ with $f(0) = 0$ and $\| f \|_\B = 1$. Note that $f^2$ verifies that $f^2 (0) = (f^2)'(0) = 0$ and due to Lemma \ref{LemParseval}
				\begin{eqnarray*}
					\| f \|_{A^4_\alpha}^4 = \| f^2 \|_{A^2_\alpha}^2 & \leq & \dfrac{\alpha + 4}{2(\alpha + 1)(\alpha + 2)} \int_\D |(f^2)'(z)|^2 (1 - |z|^2)^2 \: d\mu_\alpha (z) \\
						& \leq & \dfrac{2(\alpha + 4)}{(\alpha + 1)(\alpha + 2)} \| f \|_{A^2_\alpha}^2 \leq \dfrac{2(\alpha + 4)}{(\alpha + 1)(\alpha + 2)} \tilde{C}_\alpha^2 (2),
				\end{eqnarray*}
				and hence the statement has been proved for $n = 2$.
				
			Now assume
				\[
					\tilde{C}_\alpha (2n) \leq \dfrac{1}{\sqrt{(\alpha + 1)(\alpha + 2)}} \left( \dfrac{(\alpha + 1)(\alpha + 2) \Gamma(n + \alpha +3) n!}{\Gamma(\alpha + 4)} \tilde{C}_\alpha^2 (2) \right)^\frac{1}{2n},
				\]
				for some $n \geq 2$.
				
			Again, it is immediate that the first $n + 1$ Taylor coefficients of $f^{n + 1}$ are zero and therefore
				\begin{eqnarray*}
					\| f \|_{A^{2(n+1)}_\alpha}^{2(n+1)} = \| f^{n+1} \|_{A^2_\alpha}^2 & \leq & \dfrac{n + \alpha + 3}{(\alpha + 1)(\alpha + 2)(n + 1)} \int_\D |f^{(n+1)}(z)|^2 (1 - |z|^2)^2 \: d\mu_\alpha (z) \\
						& \leq & \dfrac{(n + \alpha + 3)(n + 1)}{(\alpha + 1)(\alpha + 2)} \| f \|_{A^{2n}}^{2n} \\
						& \leq & \dfrac{\Gamma(n + \alpha + 4) (n + 1)!}{(\alpha + 1)^n (\alpha + 2)^n \Gamma(\alpha + 4)} \tilde{C}_\alpha^2(2),
				\end{eqnarray*}
				which proves the Theorem.
		\end{proof}
		
		\begin{cor} \label{AsympBloch}
			If $\alpha > -1$, then
				\begin{eqnarray*}
					\liminf_{p \rightarrow \infty} \dfrac{C_\alpha (p)}{p} & \geq & \dfrac{1}{2e(\alpha + 2)}, \\
					\limsup_{p \rightarrow \infty} \dfrac{C_\alpha (p)}{p} & \leq & \dfrac{1}{2e \sqrt{(\alpha + 1)(\alpha + 2)}}.
				\end{eqnarray*}
		\end{cor}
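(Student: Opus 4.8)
The plan is to prove the two estimates separately, reducing each to a Stirling-type asymptotic computation. The key preliminary reduction is that, by Theorem~\ref{GrowthOrder} (which, as the text remarks just before the corollary, also holds for $\tilde{C}_\alpha$, so that $\tilde{C}_\alpha(p)\approx p$), we have $\tilde{C}_\alpha(p)\to\infty$ as $p\to\infty$. Hence \eqref{RemarkBAp} gives a threshold $p_0$ beyond which $C_\alpha(p)=\tilde{C}_\alpha(p)$, and consequently both the $\liminf$ and the $\limsup$ of $C_\alpha(p)/p$ coincide with those of $\tilde{C}_\alpha(p)/p$.

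For the upper bound on the $\limsup$, I would start from the explicit estimate of Theorem~\ref{Bound2n} for $\tilde{C}_\alpha(2n)$. Dividing by $2n$ and passing to logarithms, every factor except $\bigl(\Gamma(n+\alpha+3)\,n!\bigr)^{1/(2n)}$ is a fixed positive constant raised to the power $1/(2n)$, and therefore tends to $1$; in particular $\tilde{C}_\alpha^2(2)$ and the prefactor $(\alpha+1)(\alpha+2)/\Gamma(\alpha+4)$ disappear in the limit. Applying Stirling's formula to each Gamma factor separately yields $\Gamma(n+\alpha+3)^{1/(2n)}\sim\sqrt{n/e}$ and $(n!)^{1/(2n)}=\Gamma(n+1)^{1/(2n)}\sim\sqrt{n/e}$, so their product is asymptotic to $n/e$. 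This gives
\[
	\frac{\tilde{C}_\alpha(2n)}{2n}\le\frac{1}{2e\sqrt{(\alpha+1)(\alpha+2)}}\,\bigl(1+o(1)\bigr).
\]
To extend this from even integers to arbitrary real $p$, I would invoke the monotonicity of $\tilde{C}_\alpha$ noted before the corollary: for $2n<p\le 2(n+1)$ one has $\tilde{C}_\alpha(p)/p\le\tilde{C}_\alpha(2(n+1))/(2n)$, whose limit is the same constant, whence $\limsup_{p\to\infty}C_\alpha(p)/p\le\frac{1}{2e\sqrt{(\alpha+1)(\alpha+2)}}$.

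For the lower bound on the $\liminf$, I would recycle the estimate already established in the proof of Theorem~\ref{GrowthOrder} for the test function $f(z)=-\tfrac12\log(1-z)$, which has $\|f\|_\B=1$, namely
\[
	C_\alpha(p)^p\ge\|f\|_{A^p_\alpha}^p\ge\frac{M_\alpha}{2^{p-1}(\alpha+2)^{p+1}}\,\Gamma(p+1).
\]
Taking $p$-th roots and letting $p\to\infty$, the factors $(2M_\alpha)^{1/p}$ and $(\alpha+2)^{1/p}$ tend to $1$, while Stirling gives $\Gamma(p+1)^{1/p}=(p!)^{1/p}\sim p/e$. Therefore $C_\alpha(p)\ge\frac{p}{2e(\alpha+2)}\bigl(1+o(1)\bigr)$, which is precisely $\liminf_{p\to\infty}C_\alpha(p)/p\ge\frac{1}{2e(\alpha+2)}$.

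I expect the main technical point to be the asymptotic bookkeeping in the upper bound: one must verify that distributing the $2n$-th root across the product $\Gamma(n+\alpha+3)\,n!$ produces two independent factors each of order $\sqrt{n/e}$, so that the $\alpha$-dependent shift inside the first Gamma, together with all polynomial and constant prefactors, is absorbed into the $1+o(1)$. The passage from the sequence $p=2n$ to all real exponents is routine once the monotonicity of $\tilde{C}_\alpha$ is used, but it is exactly the step that makes the $\limsup_{p\to\infty}$ statement (rather than a mere limit along even integers) legitimate. The gap between the two constants $\frac{1}{2e(\alpha+2)}$ and $\frac{1}{2e\sqrt{(\alpha+1)(\alpha+2)}}$ reflects the loss incurred in squaring up through $f^n$ in Theorem~\ref{Bound2n} versus the single explicit test function used below, and closing it is not attempted here.
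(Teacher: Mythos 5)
Your argument is correct and follows essentially the same route as the paper: the lower bound comes from the test function $f(z)=-\tfrac12\log(1-z)$ already used in Theorem~\ref{GrowthOrder} together with Stirling's formula, and the upper bound combines the identity $C_\alpha(p)=\tilde{C}_\alpha(p)$ for large $p$ from \eqref{RemarkBAp}, the monotonicity of $\tilde{C}_\alpha$ to pass from even integers to all real exponents (the paper phrases this via $2\lceil p/2\rceil$), and the Stirling asymptotics of the bound in Theorem~\ref{Bound2n}. The only cosmetic caveat is that at the last step one should say the \emph{limsup} of $\tilde{C}_\alpha(2(n+1))/(2n)$ is at most the stated constant, since Theorem~\ref{Bound2n} only yields an upper bound, not an asymptotic equality.
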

		
		\begin{proof}
			The lower bound for the limit inferior can be deduced from the proof of Theorem \ref{GrowthOrder} and, of course, Stirling's formula for $\Gamma$.
			
			On the other hand, if $p$ is large enough
				\[
					\dfrac{C_\alpha (p)}{p} = \dfrac{\tilde{C}_\alpha (p)}{p} \leq \dfrac{2 \left \lceil \frac{p}{2} \right \rceil}{p} \dfrac{\tilde{C}_\alpha \left( 2 \left \lceil \frac{p}{2} \right \rceil \right)}{2 \left \lceil \frac{p}{2} \right \rceil},
				\]
				where $\lceil \cdot \rceil$ is the greatest integer part, so we can use Theorem \ref{Bound2n} to get the upper bound for the limit superior.
		\end{proof}
		
			Observe that both bounds are asymptotically equivalent if $\alpha$ tends to infinity, so	there is the strong feeling that the quotient $\frac{C_\alpha(p)}{p}$ might be convergent with respect $p$, not only bounded.
			
			Moreover, it is widely know that $\B$ is not contained in any Hardy space $H^p$ (that is, the set of all analytic functions whose $M_p$ means are uniformly bounded), which can be understood as the limit space $A^p_{-1}$ in the sense that
				\[
					\| f \|_{H^p} = \lim_{\alpha \rightarrow -1^+} \| f \|_{A^p_\alpha}, \quad \forall f \in H^p.
				\]
				
				Thus, the accurate asymptotic approximation should blow up whenever $\alpha$ tends to $-1$. Given these considerations, we conjecture that $\frac{C_\alpha (p)}{p}$ is actually convergent for any $\alpha > -1$ and the limit should coincide with the bound for the limit superior given in Corollary \ref{AsympBloch}.
			
		\begin{con}
			For any $\alpha > -1$,
				\[
					\lim_{p \rightarrow \infty} \dfrac{C_\alpha(p)}{p/\big(2e \sqrt{(\alpha + 1)(\alpha + 2)} \big)} = 1.
				\]
		\end{con}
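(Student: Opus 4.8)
The plan is to prove the matching lower bound
\[
\liminf_{p\to\infty}\frac{C_\alpha(p)}{p}\ge\frac{1}{2e\sqrt{(\alpha+1)(\alpha+2)}},
\]
since the reverse inequality for the limit superior is already contained in Corollary \ref{AsympBloch}; the two together yield the asserted limit. By \eqref{RemarkBAp} I may work with $\tilde C_\alpha$ in place of $C_\alpha$, so it suffices to construct, for each large $p$, a function $f\in\B$ with $f(0)=0$, $\rho_\B(f)\le1$ and $\|f\|_{A^p_\alpha}\ge(1+o(1))\frac{p}{2e\sqrt{(\alpha+1)(\alpha+2)}}$. The difficulty is that the obvious candidates fail: a direct Laplace computation shows that any fixed function with an isolated logarithmic singularity at the boundary, such as $-\frac12\log(1-z)$ or $\frac12\log\frac{1+z}{1-z}$, produces only the suboptimal constant $\frac1{2e(\alpha+2)}$. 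The reason is that the set on which such a function attains its maximal growth $\frac12\log\frac1{1-|z|}$ is an arc of width comparable to $1-|z|$, and this extra factor turns the base $\alpha+1$ coming from the weight into $\alpha+2$.

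To recover the missing factor I would take as test functions the logarithmic potentials
\[
f_\nu(z)=\int_{\partial\D}\log\frac{1}{1-\overline{\zeta} z}\,d\nu(\zeta),
\]
where $\nu$ is supported on a boundary arc of width $\delta=\delta_p$ centred at $1$, with a density to be optimised, and I would let $\delta_p\to0$ coupled to $p$. The modulus $|f_\nu|$ then has a plateau of height $\approx\log\frac1\delta$ on the region within distance $\delta$ of the arc and decays like the potential of a point mass outside it; analysing $\int_\D|f_\nu|^p\,d\mu_\alpha$ in the coordinates $\varepsilon=1-|z|$, $\theta=\arg z$ shows that the plateau and tail contributions balance when $\log\frac1{\delta_p}\sim\frac{p}{\alpha+2}$, and that the resulting value of $\|f_\nu\|_{A^p_\alpha}/p$ is governed, in the limit, by a single scale-invariant shape functional of $\nu$. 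Spreading the charge over an arc rather than a point is exactly what keeps the angular extent of the plateau macroscopic, which is what removes the spurious factor and opens the way to the base $\sqrt{(\alpha+1)(\alpha+2)}$.

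The heart of the matter is then the variational problem of choosing the profile of $\nu$ so as to maximise $\lim_p\|f_\nu\|_{A^p_\alpha}/p$ subject to $\rho_\B(f_\nu)\le1$. Because the weight $(1-|z|^2)^\alpha$ enters the Laplace balance, the optimal profile and the optimal constant genuinely depend on $\alpha$, and I expect the associated equilibrium problem to be solvable in closed form with optimal value precisely $\frac1{2e\sqrt{(\alpha+1)(\alpha+2)}}$. An equivalent but more intrinsic route is to work with the genuine extremals $F_p$ of \eqref{NormBlochAp}: one would prove that the normalised measures $\|F_p\|_{A^p_\alpha}^{-p}\,|F_p|^p\,d\mu_\alpha$ concentrate, as $p\to\infty$, on the set where $|F_p'(z)|(1-|z|^2)=\rho_\B(F_p)$. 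This is exactly the condition under which every inequality in the proof of Theorem \ref{Bound2n} becomes an asymptotic equality, and therefore forces the constant to coincide with the upper bound.

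I expect the determination of the optimal shape functional — equivalently, the concentration/rigidity statement for the true extremals — to be the main obstacle. The tension is structural: maximal hyperbolic growth wants the charge at a single point, which costs the extra weight factor and yields base $\alpha+2$, whereas the weight rewards spreading the charge along the boundary, which would give base $\alpha+1$ but is incompatible with maintaining logarithmic growth, since a smooth spread of charge produces a bounded potential. The conjecture asserts that the optimal compromise sits exactly at the geometric mean of the two, and the delicate point is to quantify this trade-off precisely. The already-established upper bound guarantees that the optimisation cannot overshoot, which is what makes the variational problem well posed and its extremiser, once identified, conclusive.
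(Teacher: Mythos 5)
The statement you are addressing is stated in the paper as a \emph{conjecture}: the paper proves only the non-matching bounds $\liminf_p C_\alpha(p)/p \geq \tfrac{1}{2e(\alpha+2)}$ and $\limsup_p C_\alpha(p)/p \leq \tfrac{1}{2e\sqrt{(\alpha+1)(\alpha+2)}}$ (Corollary \ref{AsympBloch}) and explicitly leaves the limit open. Your proposal correctly identifies that only the lower bound needs improving, but it does not close the gap: the decisive step --- solving the equilibrium problem for the profile of $\nu$, or equivalently proving the concentration/rigidity statement for the true extremals, and showing the optimal value is exactly $\tfrac{1}{2e\sqrt{(\alpha+1)(\alpha+2)}}$ --- is announced as something you ``expect'' rather than established. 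As written, the argument proves nothing beyond what Theorem \ref{GrowthOrder} and Corollary \ref{AsympBloch} already give.

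There is also a concrete reason to doubt that the proposed family of test functions can succeed. For a positive measure $\nu$ on an arc of width $\delta$ about $1$, the Bloch normalization caps the total mass at essentially $\tfrac12$ (this is what a point mass at $1$ saturates), so the plateau of $|f_\nu|$ near the arc has height about $\tfrac12\log\tfrac1\delta$ over a region of hyperbolic size $\delta\times\delta$; its contribution to $\|f_\nu\|_{A^p_\alpha}^p$ is of order $\delta^{\alpha+2}\bigl(\tfrac12\log\tfrac1\delta\bigr)^p$, which after optimizing $\delta$ gives base $\alpha+2$ again. Away from the arc $f_\nu$ behaves like the point-mass potential, whose level set at height $\tfrac12\log\tfrac1\varepsilon$ (with $\varepsilon=1-|z|$) has angular width $\asymp\varepsilon$, again producing $\int\varepsilon^{\alpha+1}\bigl(\tfrac12\log\tfrac1\varepsilon\bigr)^p\,d\varepsilon$ and hence the constant $\tfrac1{2e(\alpha+2)}$ already obtained in the paper from $-\tfrac12\log(1-z)$. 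Reaching the geometric mean $\sqrt{(\alpha+1)(\alpha+2)}$ would require a unimodular-Bloch-normalized function whose level set at height $\tfrac12\log\tfrac1\varepsilon$ has angular width $\asymp\varepsilon^{1/2}$; whether such a function exists is precisely the content of the conjecture, and your closing paragraph acknowledges this tension without resolving it. So the proposal is a plausible research program, not a proof, and the specific construction you describe appears to reproduce the known lower bound rather than improve it.
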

		
		We end the paper by deducing an integral condition that the extremal functions must verify.
		
		\begin{thm}
			Let $\alpha > - 1$, $p > 1$ and $f \in \B$ non-identically zero such that $f(0) = 0$ and $\| f \|_{A^p_\alpha} = \tilde{C}_\alpha (p) \| f \|_\B$. Then $f$ is a solution of the functional equation
				\[
					\int_\D |f(z)|^p z \: d\mu_\alpha (z) = \dfrac{p}{2(\alpha + 2)} \overline{f'(0)} \int_\D |f(z)|^{p - 2} f(z) \: d\mu_\alpha (z).
				\]
		\end{thm}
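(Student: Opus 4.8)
\emph{Proof proposal.} The plan is to run a variational argument that exploits the Möbius invariance of the Bloch seminorm. Since $f(0)=0$ we have $\|f\|_\B=\rho_\B(f)$, and the extremality hypothesis reads $\|f\|_{A^p_\alpha}=\tilde{C}_\alpha(p)\,\rho_\B(f)$. For small $t\in\C$ I would introduce the automorphisms $\psi_t(z)=\frac{z+t}{1+\overline{t}z}$ and set
\[
	f_t(z) := f(\psi_t(z)) - f(\psi_t(0)).
\]
Then $f_t(0)=0$, and since $\rho_\B$ is invariant under composition with disk automorphisms (by the Schwarz--Pick identity $|\psi_t'(z)|(1-|z|^2)=1-|\psi_t(z)|^2$) and under the addition of constants, we get $\rho_\B(f_t)=\rho_\B(f)$. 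Hence every $f_t$ is admissible with $\|f_t\|_\B=\|f\|_\B$, so $\|f_t\|_{A^p_\alpha}\le\tilde{C}_\alpha(p)\,\rho_\B(f)=\|f\|_{A^p_\alpha}$ with equality at $t=0$. Consequently $\Phi(t):=\|f_t\|_{A^p_\alpha}^p$ has an interior maximum at $t=0$, which I would encode as the single complex first-order condition $\partial_{\overline{t}}\,\Phi(0)=0$ (its $\partial_t$-counterpart is the conjugate).

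Next I would differentiate under the integral sign. Using $\psi_t(0)=t$, Wirtinger calculus gives $\partial_t\psi_t|_{0}=1$ and $\partial_{\overline{t}}\psi_t|_{0}=-z^2$, whence at $t=0$ (recall $f_0=f$)
\[
	\partial_{\overline{t}}f_t = -z^2 f', \qquad \partial_{\overline{t}}\,\overline{f_t} = \overline{f'} - \overline{f'(0)}.
\]
Because $s\mapsto|s|^p$ is $C^1$ for $p>1$, differentiating $|f_t|^p=(f_t\overline{f_t})^{p/2}$ and imposing $\partial_{\overline{t}}\Phi(0)=0$ yields
\[
	\int_\D |f|^{p-2}\left[-z^2 f'\overline{f} + f\overline{f'}\right] d\mu_\alpha = \overline{f'(0)}\int_\D |f|^{p-2}f\, d\mu_\alpha .
\]
The two terms on the left are exact derivatives, $|f|^{p-2}f'\overline{f}=\frac{2}{p}\,\partial_z(|f|^p)$ and $|f|^{p-2}f\overline{f'}=\frac{2}{p}\,\partial_{\overline{z}}(|f|^p)$, so the left-hand side equals $\frac{2}{p}\int_\D\left[\partial_{\overline{z}}(|f|^p)-z^2\partial_z(|f|^p)\right]d\mu_\alpha$.

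The core of the argument is an integration by parts removing these derivatives. Writing $g:=|f|^p$ and $d\mu_\alpha=(\alpha+1)(1-|z|^2)^\alpha\,dA$, I would integrate over $r\D$ and let $r\to1$. Transferring $\partial_z$ and $\partial_{\overline{z}}$ onto the weight produces the interior integrand
\[
	\partial_z\!\left[z^2(1-|z|^2)^\alpha\right] - \partial_{\overline{z}}\!\left[(1-|z|^2)^\alpha\right] = (\alpha+2)\,z\,(1-|z|^2)^\alpha,
\]
where the singular terms carrying $(1-|z|^2)^{\alpha-1}$ cancel after using $z^2\overline{z}=z-z(1-|z|^2)$. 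This gives the interior contribution $(\alpha+2)\int_\D |f|^p z\, d\mu_\alpha$, and substituting back turns the optimality condition into the claimed functional equation, namely $\frac{2}{p}(\alpha+2)\int_\D|f|^pz\,d\mu_\alpha=\overline{f'(0)}\int_\D|f|^{p-2}f\,d\mu_\alpha$.

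The main obstacle is the rigorous passage $r\to1$, i.e. the vanishing of the boundary terms, which is delicate exactly when $-1<\alpha\le0$ and the weight does not vanish at $\partial\D$. The saving feature is that the two boundary integrals combine: on $|z|=r$ each is a multiple of $(1-r^2)^\alpha\int_0^{2\pi}e^{i\theta}|f(re^{i\theta})|^p\,d\theta$, with coefficients proportional to $r^3$ and $r$ respectively, so that their difference carries the extra factor $r(1-r^2)^{\alpha+1}$. Since $\alpha+1>0$ and the angular integral is controlled by $M_p^p(r,f)\lesssim\left(\log\frac{1}{1-r}\right)^p$ via \eqref{PBoundsBloch}, this combined boundary term tends to $0$ for every $\alpha>-1$. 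Alongside this I would record the routine points: all integrals converge for $p>1$ (near the isolated zeros $z_0$ of $f$ the integrands are $O(|z-z_0|^{p-1})$), and both the differentiability of $\Phi$ and the legitimacy of differentiating under the integral sign follow from the $C^1$-regularity of $|s|^p$ together with domination by \eqref{PBoundsBloch} uniformly for small $t$.
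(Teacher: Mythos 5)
Your variational family is, up to the reparametrization $a=t$ and a rotation of the variable, the same one the paper uses, and your Wirtinger computations, the cancellation of the $(1-|z|^2)^{\alpha-1}$ terms, and the observation that the two boundary integrals combine into a single term carrying the factor $r(1-r^2)^{\alpha+1}$ are all correct. The genuine gap is in the step you dismiss as routine: differentiating $\Phi(t)=\int_\D|f_t|^p\,d\mu_\alpha$ under the integral sign. The $t$-derivative of the integrand involves $f'(\psi_t(z))\,\partial\psi_t$, and the only available pointwise control on $f'$ for a Bloch function is $|f'(w)|\le\rho_\B(f)(1-|w|^2)^{-1}$; by the Schwarz--Pick identity this yields a dominating function of size $\bigl(\log\tfrac{e}{1-|z|}\bigr)^{p-1}(1-|z|^2)^{-1}$, which is \emph{not} $\mu_\alpha$-integrable when $-1<\alpha\le 0$. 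The estimate \eqref{PBoundsBloch} that you invoke controls $|f_t|$ but not the derivative factor, so dominated convergence does not apply on this range. Worse, your intermediate first-order condition $\int_\D|f|^{p-2}\bigl[f\overline{f'}-z^2\overline{f}f'\bigr]\,d\mu_\alpha=\overline{f'(0)}\int_\D|f|^{p-2}f\,d\mu_\alpha$ need not even be an absolutely convergent integral for such $\alpha$ (lacunary Bloch functions already give $\int_\D|f'|\,d\mu_\alpha=\infty$ when $\alpha\le 0$), so the identity you integrate by parts cannot be asserted as written. For $\alpha>0$ your domination does go through and your route is a clean alternative that, pleasantly, avoids the splitting of $\D$ into the sets where $|f(\zeta)|\gtrless|f(a)|$ by using the $C^1$-regularity of $s\mapsto|s|^p$.

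The fix is exactly the change of variables your own family is inviting: substituting $w=\psi_t(z)$ \emph{before} differentiating turns $\Phi(t)$ into $\int_\D|f(w)-f(t)|^p\,|\psi_t'(w)|^{\alpha+2}\,d\mu_\alpha(w)$ (this is what the paper does). The perturbation then sits entirely in the Jacobian weight and in the constant $f(t)=f'(0)t+o(t)$, so no $f'(\zeta)$ ever appears inside the integral; the first-order terms are dominated by a constant multiple of $|f|^p+|f|^{p-1}$, which is $\mu_\alpha$-integrable for every $\alpha>-1$, and your subsequent integration by parts becomes unnecessary because the factor $(\alpha+2)\zeta$ is produced directly by expanding $|\psi_t'(\zeta)|^{\alpha+2}=1+2(\alpha+2)\operatorname{Re}\{\overline{t}\zeta\}+o(|t|)$. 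As your argument stands, the theorem is only established for $\alpha>0$, not for the full range $\alpha>-1$.
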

		
		\begin{proof}
			Take $f \in \B$, $f \not \equiv 0$, such that $f(0) = 0$ and $\| f \|_{A^p_\alpha} = \tilde{C}_\alpha (p) \| f \|_\B$.
			
			Consider the family of functions
				\[
					g_a (z) := f \big ( \varphi_a (z) \big) - f(a), \quad a \in \D.
				\]
			
			It is clear that $g_a(0) = 0$ for every $a$, and the Bloch space is strictly conformally invariant (\emph{i.e.}, $\rho_\B(h \circ \varphi_a) = \rho_\B (h)$ whenever $h \in \B$) so $\| g_a \|_\B = \| f \|_\B$, $\forall a \in \D$. The function $f$ is extremal and then by definition
				\[
					\int_\D |f(\zeta) - f(a)|^p |\varphi_a'(\zeta)|^{\alpha + 2} \: d\mu_\alpha (\zeta) = \int_\D |g_a|^p \: d\mu_\alpha \leq \int_\D |f|^p \: d\mu_\alpha.
				\]
			
			Note that $|\varphi_a'(\zeta)|^2 = \frac{(1 - |a|^2)^2}{|1 - \overline{a}\zeta} = 1 + 4 \Rea \{ \overline{a} \zeta \} + o(|a|)$, and therefore 
				\[
					|\varphi_a'(\zeta)|^{\alpha + 2} = 1 + 2(\alpha + 2) \Rea \{ \overline{a} \zeta \} + o(|a|), \quad a \rightarrow 0.
				\]
			
			Take the partition $\D_a^+ := \{ \zeta \in \D \: : \: |f(\zeta)| > |f(a)| \}$ and $\D_a^- := \D \setminus \D_a^+$.
				
			\begin{itemize}
				\item If $\zeta \in \D_a^-$, using the fact that $p > 1$ and $f(0) = 0$,
					\[
						|f(\zeta) - f(a)|^p \leq 2^p |f(a)|^p = o(|a|), \quad a \rightarrow 0.
					\]
					
					Thus, applying Dominated Convergence Theorem we have
					\begin{eqnarray*}
						\int_{\D_a^-} |f(\zeta) - f(a)|^p |\varphi_a'(\zeta)|^{\alpha + 2} \: d\mu_\alpha (\zeta) & = & o(|a|), \\
						\int_{\D_a^-} |f|^p \: d\mu_\alpha & = & o(|a|),
					\end{eqnarray*}
					for $a \rightarrow 0$.
									
				\item If $\zeta \in \D_a^+$,
					\[
						|f(\zeta) - f(a)|^p = |f(\zeta)|^p - p |f(\zeta)|^{p-2} \Rea \{ f(\zeta) \overline{f'(0)a} \} + o(|a|), \quad a \rightarrow 0.
					\]
					
					Applying again Dominated Convergence Theorem,
					 \begin{eqnarray*}
					 	\int_{\D_a^+} |f - f(a)|^p |\varphi_a'|^{\alpha + 2} \: d\mu_\alpha & = & \int_{\D_a^+} |f|^{p-2} \Rea \left \{ [2(\alpha + 2) |f|^2 \zeta - p f \overline{f'(0)} ] \overline{a} \right \} \: d\mu_\alpha \\
					 		& & + \int_{\D_a^+} |f|^p \: d\mu_\alpha + o(|a|), \quad a \rightarrow 0.
					 \end{eqnarray*}
			\end{itemize}
			
			Summing up, if we write $a = r e^{it}$ with $r > 0$ and $t \in [0, 2\pi)$, we have
				\[
					\Rea \left \{ e^{-it} \int_{\D_a^+} |f(\zeta)|^{p-2} \left [ 2(\alpha + 2) |f(\zeta)|^2 \zeta - p \overline{f'(0)} f(\zeta)\right] \: d\mu_\alpha (\zeta) \right \} \leq \dfrac{o(r)}{r}, \quad r \rightarrow 0^+.
				\]
			
			Using one last time Dominated Convergence Theorem, we prove
				\[
					\int_{\D \setminus \mathcal{Z}(f)} |f(\zeta)|^p \zeta \: d\mu_\alpha (\zeta) = \dfrac{p}{2 (\alpha + 2)} \overline{f'(0)} \int_{\D \setminus \mathcal{Z} (f)} |f(\zeta)|^{p - 2} f(\zeta) \: d\mu_\alpha (\zeta),
				\]
				where $\mathcal{Z}(f)$ is the zero set of $f$. The function $f$ is not identically zero by hypothesis, thus $\mu_\alpha \big ( \mathcal{Z}(f) \big) = 0$ and
				\[
					\int_\D |f(\zeta)|^p \zeta \: d\mu_\alpha (\zeta) = \dfrac{p}{2 (\alpha + 2)} \overline{f'(0)} \int_\D |f(\zeta)|^{p - 2} f(\zeta) \: d\mu_\alpha (\zeta).
				\]
		\end{proof}
		
	\section*{Acknowledgments} 
		
		The author is partially supported by grant PID2019-106870GB-I00 from MICINN (Spain) and by MU Fellowship, reference number FPU/00040.
		
	\nocite{MR1758653, MR2033762, MR2311536}
	\bibliographystyle{}
	\bibliography{Bibliografia}

\end{document}